\theoremstyle{plain}
\newtheorem{theorem}{Theorem}
\newtheorem{lemma}{Lemma}
\newtheorem*{theo*}{Theorem}
\newtheorem{corollary}{Corollary}
\theoremstyle{definition}
\newtheorem*{definition*}{Definition}
\newtheorem{remark}{Remark}
\DeclareMathOperator{\Ker}{Ker}
\DeclareMathOperator{\Der}{Der}
\DeclareMathOperator{\diver}{div}
\begin{document}
\sloppy
\title[On Lie algebras consisting of locally nilpotent  derivations]
{On Lie algebras consisting of locally nilpotent  derivations}
\author
{A.P.  Petravchuk and K. Ya. Sysak}
\address{Anatoliy P. Petravchuk:
Department of Algebra and Mathematical Logic, Faculty of Mechanics and Mathematics, 
Taras Shevchenko National University of Kyiv, 64, Volodymyrska street, 01033  Kyiv, Ukraine}
\email{aptr@univ.kiev.ua , apetrav@gmail.com}
\address{Kateryna Ya. Sysak:
Department of Algebra and Mathematical Logic, Faculty of Mechanics and Mathematics, Taras Shevchenko National University of Kyiv, 64, Volodymyrska street, 01033  Kyiv, Ukraine}
\email{sysakkya@gmail.com}
\date{\today}
\keywords{Lie algebra, vector field,  triangular, locally nilpotent derivation}
\subjclass[2000]{Primary 17B66; Secondary 17B05, 13N15}

%\thanks{
% The second author was partially supported by the DFFD,
%grant F28.1/026}
%
\begin{abstract}
Let $K$ be an algebraically closed   field of characteristic zero and $A$  an integral $K$-domain. The Lie algebra $Der _{K}(A)$ of all $K$-derivations of $A$ contains the set $LND(A)$ of all locally nilpotent derivations. The structure of $LND(A)$ is of great interest, and the question about properties of Lie algebras contained in $LND(A)$ is still open. An answer to it in the finite dimensional case is given. It is proved that any finite dimensional (over $K$) subalgebra of $Der _{K}(A)$ consisting of locally nilpotent derivations is nilpotent. In the case $A=K[x, y],$ it is also proved that  any  subalgebra of $Der _{K}(A)$ consisting of locally nilpotent derivations is conjugated by an automorphism of $K[x, y]$ with a subalgebra of the triangular Lie algebra.
 \end{abstract}
\maketitle

%%%%%%%%%%%%%%%%%%%%%%%%%%%%%%%%%%%%%%%%%%%%%%%%%%%%%%%%%%%

\section{Introduction}

Let $\mathbb K$ be an algebraically closed  field of characteristic zero and $A$ an associative commutative $\mathbb K$-algebra that is a domain. A derivation $D:A\to A$ is called locally nilpotent if for any element $a\in A$ there exists a positive integer $n=n(a)$ such that $D^{n}(a)=0.$  The study of  locally nilpotent derivations is an important problem in differential algebra because the exponents of such  derivations are  automorphisms of the associative algebra $A$ and they carry information about $A.$ Many papers and a few monographs are devoted to locally nilpotent derivations (see, for example, \cite{Rentschler}, \cite{Fr}, \cite{ML}, \cite{Nowicki}, \cite{Essen}, \cite{Kaliman}, etc). One of  unsolved problems is to describe all Lie algebras contained in the set ${\rm LND}(A)$ of all locally nilpotent derivations on the algebra $A$  (see Problem 11.6  in  \cite{Fr}). In this paper, it is proved  that every finite dimensional (over $\mathbb K$)  subalgebra of the Lie algebra $\Der_{\mathbb K}(A)$  consisting of locally nilpotent derivations is nilpotent (Theorem \ref{Th1}). In the case $A=\mathbb K[x, y]$, the polynomial ring in two variables, it is  proved that every subalgebra $L\subseteq {\rm LND}(A)$ of  ${\rm Der} _{\mathbb K}(A)$  is conjugated  with a subalgebra of the triangular Lie algebra $u_2(\mathbb K)$ by an automorphism of $\mathbb K[x, y].$ By Rentschler's Theorem \cite{Rentschler}, the structure of ${\rm LND}(\mathbb K[x, y])$ is as follows: $${\rm LND}(\mathbb K[x, y])=\bigcup _{\theta \in Aut(\mathbb K[x, y])} \theta u_{2}(\mathbb K)\theta ^{-1}.$$ It is proved that every Lie algebra lying in ${\rm LND}(\mathbb K[x, y])$ is contained entirely in at least one of subalgebras conjugated with $u_{2}(\mathbb K)$  (Theorem \ref{Th2}).

We use standard notations. The ground field $\mathbb K$ is algebraically closed of characteristic zero. The quotient field of the integral domain $A$ under consideration is denoted by $R.$ The set of all locally nilpotent derivations of $A$ is denoted by ${\rm LND}(A).$  Any derivation $D$ of $A$ can be uniquely extended  to a derivation of $R$ by the rule: $D(a/b)=(D(a)b-aD(b))/b^{2}.$
If $F$ is a subfield of the field $R$ and $r_{1}, \ldots , r_{k}\in R, $ then the set of all linear combinations of these elements with coefficients in $F$  is denoted by $F\langle r_{1}, \ldots , r_{k}\rangle ; $ it is a  subspace of the $F$-space $R.$
The triangular subalgebra $u{_2}(\mathbb K)$ of the Lie algebra $W_{2}(\mathbb K)=\Der (\mathbb K[x, y])$  consists of all the derivations on the ring $\mathbb K[x, y]$ of the form $D=\alpha \frac{\partial}{\partial x}+\beta (x)\frac{\partial}{\partial y},$  where $\alpha \in \mathbb K, \beta (x)\in \mathbb K[x]$ (about properties of triangular Lie algebras see \cite{Bavula1}).   Recall that a subalgebra $B$ of an associative commutative algebra $A$ is factorially closed in $A$ if the relations $a_1a_2\in B, a_1\not= 0, a_2\not =0,$ imply  $a_1\in B$ and $a_2\in B. $ A polynomial $a=a(x, y)\in \mathbb K[x, y]$ is called coordinate if there exists a polynomial $b=b(x, y)\in \mathbb K[x, y]$ such that $\mathbb K[x, y]=\mathbb K[a, b].$ Then the polynomials $a$ and $b$  form a coordinate pair $(a, b).$   If $f\in \mathbb K[x, y],$ then $f$ induces the  Jacobian derivation $D_{f}$ of the ring $\mathbb K[x, y]$ by the rule: $D_f(h)=\det J(f, h)$  for any $h\in \mathbb K[x, y],$ where $\det J(f, h)$ is the Jacobian determinant of the polynomials $f$ and $h.$ For any derivation $D=f\frac{\partial}{\partial x}+g\frac{\partial}{\partial y}\in \Der(\mathbb K[x,y])$ we denote by $\diver D$ the divergence of $D$: $\diver D=\frac{\partial f}{\partial x}+\frac{\partial g}{\partial y}.$ The Lie algebra $W_2(\mathbb K)$ is a free module over the ring $\mathbb K[x, y]$ (of rank $2$), so for any subalgebra $L\subseteq W_2(\mathbb K)$ one can define rank of $L$ over the ring $\mathbb K[x, y].$

\section{Finite dimensional Lie algebras consisting of locally nilpotent derivations}
Throughout this section,  $A$ denotes an integral $\mathbb K$-domain and $R$ the field of fractions for the algebra $A.$
The set  of all $\mathbb K$-derivations of $A$ is denoted by $\Der_{\mathbb K} A,$  it is a Lie algebra  over the field $\mathbb K.$ Some properties of locally nilpotent derivations are pointed out in the next two lemmas.
\begin{lemma}\label{principle_1}
Let  $D$ be a locally nilpotent derivation of the algebra   $A$ and  $\delta$  its extension   on the fraction field $R$ of  $A.$ Then:
\begin{enumerate}
\item[\rm(a)]{\cite[Principle 1]{Fr}}  $\Ker D$ is a factorially closed subring of  $A$.
\item[\rm(b)]\label{cor_1.23}{\cite[Corollary 1.23]{Fr}} $\Ker\delta=\mathrm{Frac}(\Ker D)$.
\item[\rm(c)]{\cite[Principle 11(e)]{Fr}} Transcendence degree of the field $R$ over the subfield $\Ker\delta$ equals $1$.
\item[\rm(d)]\label{cor_Fr}{\cite[Corollary 1.20]{Fr}} If $D(a)=ab$ for some  $a,b\in A$, then  $D(a)=0$.
\end{enumerate}
\end{lemma}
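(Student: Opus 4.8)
The plan is to reduce all four parts to a single elementary tool, the \emph{degree function} of a locally nilpotent derivation: for $0\neq a\in A$ put $\deg_D a=\max\{n\ge 0:D^{n}(a)\neq 0\}$ and $\deg_D 0=-\infty$. The only place where the standing hypotheses (that $A$ is a domain and $\mathrm{char}\,\mathbb K=0$) are genuinely used is the multiplicativity $\deg_D(ab)=\deg_D a+\deg_D b$, which I would prove by a leading-term computation: if $\deg_D a=m$ and $\deg_D b=n$, then by the Leibniz rule $D^{m+n}(ab)=\binom{m+n}{m}D^{m}(a)D^{n}(b)$, since every other summand carries a factor $D^{i}(a)$ with $i>m$ or $D^{j}(b)$ with $j>n$; as $\binom{m+n}{m}$ is invertible in $\mathbb K$ and $A$ has no zero divisors this is nonzero, while $D^{m+n+1}(ab)=0$ is immediate. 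The same computation gives $\deg_D(D(a))=\deg_D a-1$ whenever $D(a)\neq 0$.

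Parts (a) and (d) then fall out at once. For (a): $\Ker D$ is a subring because $D$ is a derivation, and if $a_1a_2\in\Ker D$ with $a_1,a_2\neq 0$, then $0=\deg_D(a_1a_2)=\deg_D a_1+\deg_D a_2$ with both terms $\ge 0$, so $\deg_D a_1=\deg_D a_2=0$, i.e. $a_1,a_2\in\Ker D$. For (d): if $D(a)\neq 0$ then $a\neq 0$ and $b\neq 0$, so on one hand $\deg_D(D(a))=\deg_D a-1$, and on the other hand $D(a)=ab$ forces $\deg_D(D(a))=\deg_D a+\deg_D b\ge\deg_D a$, a contradiction; hence $D(a)=0$.

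For (b) and (c) I would use a \emph{local slice}. Assume $D\neq 0$ (otherwise everything is trivial) and pick $a$ with $D(a)\neq 0$; then $s:=D^{\deg_D a-1}(a)$ satisfies $r:=D(s)\neq 0$ and $D^{2}(s)=0$, with $r\in\Ker D$. Localizing, the element $t=s/r\in A_{r}=A[r^{-1}]$ has $D(t)=1$, and the Slice Theorem identifies $A_{r}$ with the polynomial ring $(\Ker D)_{r}[t]$, on which $D$ acts as $d/dt$. Passing to fraction fields, $R=\mathrm{Frac}(A)=\mathrm{Frac}\big((\Ker D)_{r}[t]\big)=\mathrm{Frac}(\Ker D)(t)$ and $\delta$ acts as $d/dt$. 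The kernel of $d/dt$ on a rational function field $F(t)$ in characteristic zero is exactly $F$, giving $\Ker\delta=\mathrm{Frac}(\Ker D)$, which is (b); and $t$ is transcendental over $\mathrm{Frac}(\Ker D)=\Ker\delta$, so $\trdeg_{\Ker\delta}R=1$, which is (c). The inclusion $\mathrm{Frac}(\Ker D)\subseteq\Ker\delta$ needs no slice, being immediate from the quotient rule for $\delta$.

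The one step requiring real care is setting up the local slice cleanly: the existence of $s$ with $D(s)\neq0=D^{2}(s)$, and the verification that $A_{r}=(\Ker D)_{r}[t]$ as a polynomial ring with $D=d/dt$; everything else is bookkeeping with $\deg_D$ and the quotient rule. Since all four assertions are recorded in \cite{Fr} with the exact references indicated in the statement, in the paper it suffices to quote them; the sketch above is how one verifies them.
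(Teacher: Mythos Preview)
The paper gives no proof of this lemma at all: each item is simply quoted from Freudenburg's monograph with the indicated reference, and the lemma is used as a black box. Your proposal is therefore not comparable to the paper's argument, but you clearly anticipated this in your final paragraph.

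Your sketch itself is correct and is, in fact, essentially how Freudenburg proves these facts: the degree function $\deg_D$ with its additivity on products (via Leibniz, using $\mathrm{char}\,\mathbb K=0$ and integrality of $A$) immediately gives (a) and (d), and the local-slice/Dixmier map yielding $A_r=(\Ker D)_r[t]$ with $D=d/dt$ gives (b) and (c). One small caveat: your parenthetical ``otherwise everything is trivial'' for $D=0$ is fine for (a), (b), (d), but part (c) as stated is simply false when $D=0$ (then $\Ker\delta=R$ and the transcendence degree is $0$); the convention in \cite{Fr}, and implicitly here, is that $D$ is nonzero in (c). Also, you invoke the Slice Theorem as a named result; if you wanted a fully self-contained argument you would need the short verification that the Taylor map $a\mapsto\sum_{k\ge0}\frac{(-1)^k}{k!}D^k(a)t^k$ identifies $A_r$ with $(\Ker D)_r[t]$, but this is routine and exactly what Freudenburg does.
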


\begin{lemma}\label{principle_12}{\cite[Principle 12]{Fr}}
 Let $D_1, D_2\in L$ be locally nilpotent derivations of the algebra  $A$ such that  $\Ker D_1=\Ker D_2=B$. Then there exist nonzero elements  $a,b\in B$ such that  $aD_1=bD_2$.
\end{lemma}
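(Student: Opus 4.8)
The plan is to reduce the statement to the Slice Theorem by localizing at a suitable element of the common kernel $B$. If $D_1=0$ or $D_2=0$ then $B=A$, both derivations vanish, and the claim holds trivially with $a=b=1$; so I would assume $D_1,D_2\neq 0$. Since $D_2$ is a nonzero locally nilpotent derivation, it has a local slice: choosing $c\in A$ with $D_2(c)\neq 0$ and letting $n\geq 1$ be largest with $D_2^{n}(c)\neq 0$, the element $r:=D_2^{n-1}(c)$ satisfies $s:=D_2(r)\neq 0$ and $D_2^{2}(r)=0$. Then $s=D_2(r)\in\Ker D_2=B=\Ker D_1$, so $D_1(s)=D_2(s)=0$.

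Next I would pass to the localization $A_s=A[s^{-1}]$. Both $D_i$ extend uniquely to $A_s$, and since $D_i(s)=0$ one has $D_i(x/s^k)=D_i(x)/s^k$, so these extensions remain locally nilpotent with kernels $B_s=B[s^{-1}]$. The derivation $s^{-1}D_2$ of $A_s$ is again locally nilpotent (for the same reason, $(s^{-1}D_2)^n=s^{-n}D_2^{n}$) and has $r$ as a \emph{global} slice, $(s^{-1}D_2)(r)=1$. Hence by the Slice Theorem $A_s=B_s[r]$ is a polynomial ring in $r$ over $B_s$, on which $D_2=s\,\partial/\partial r$.

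The key step is to feed in local nilpotence of $D_1$ on $A_s$. Since $D_1$ kills $B_s$ and $A_s=B_s[r]$, it must be the $B_s$-derivation $D_1=h\,\partial/\partial r$ with $h:=D_1(r)\in B_s[r]$. Writing $h=\sum_{i=0}^{d}h_i r^{i}$ with $h_i\in B_s$ and $h_d\neq 0$, a short induction computes $\deg_r D_1^{n}(r)$: if $d\geq 2$ this grows without bound (in characteristic zero the successive leading coefficients never vanish, as $B_s[r]$ is a domain), while if $d=1$ one finds $D_1^{n}(r)=h_1^{\,n-1}(h_0+h_1 r)\neq 0$ for every $n$; either way $D_1$ would not be locally nilpotent. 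So $d=0$, i.e.\ $h=h_0\in B_s\setminus\{0\}$ and $D_1=h_0\,\partial/\partial r$. Comparing with $D_2=s\,\partial/\partial r$ gives $sD_1=h_0 D_2$ on $A_s$; clearing the denominator, $h_0=\beta s^{-k}$ with $\beta\in B$ and $k\geq 0$, one gets $s^{k+1}D_1=\beta D_2$ as derivations of $A_s$, and restricting to $A$ yields $aD_1=bD_2$ with $a=s^{k+1}$ and $b=\beta$ nonzero elements of $B$.

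I expect the only real obstacle to be this last degree computation: it is the single place where local nilpotence of $D_1$ is used, and it is what forces the coefficient $h$ to be a constant of $D_2$; everything else is formal bookkeeping once the Slice Theorem puts $A_s$ in polynomial form. An alternative, more invariant route leading to the same point: extend $D_1,D_2$ to $R$ and note that $\Der_F(R)$ is one-dimensional over $R$, where $F=\Ker\delta_1=\Ker\delta_2=\mathrm{Frac}(B)$ (using Lemma \ref{principle_1}(b) together with the transcendence-degree statement), so $\delta_1=\lambda\delta_2$ for a unique $\lambda\in R\setminus\{0\}$; the content of the lemma is then precisely that local nilpotence forces $\lambda\in F$, and writing $\lambda=b/a$ with $a,b\in B$ gives $aD_1=bD_2$.
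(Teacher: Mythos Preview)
The paper does not supply its own proof of this lemma; it is stated with a bare citation to \cite[Principle 12]{Fr}. Your argument is correct and is essentially the standard proof found in that reference: localize at the image $s=D_2(r)$ of a preslice to obtain a genuine slice, invoke the Slice Theorem to write $A_s=B_s[r]$ with $D_2=s\,\partial/\partial r$, and then use local nilpotence of $D_1$ on $B_s[r]$ (via the degree-in-$r$ computation) to force $D_1(r)\in B_s$. There is therefore nothing substantive to compare.
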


\begin{lemma}\label{kryteriy}(see, for example, \cite{Jacob}, p.54).
Let  $L$ be a finite dimensional Lie algebra over an algebraically closed field  $\mathbb K$. Then  the algebra $L$ is nilpotent if and only if every two-dimensional subalgebra of $L$ is abelian.
\end{lemma}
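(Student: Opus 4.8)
The plan is to establish the two implications separately. The forward implication is routine; the reverse one is the substance of the lemma and reduces, via a short spectral argument, to Engel's theorem.

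First I would treat the ``only if'' direction. If $L$ is nilpotent then every subalgebra of $L$ is nilpotent, since the lower central series of a subalgebra is contained term-by-term in that of $L$. It therefore suffices to recall that, up to isomorphism, the only two-dimensional Lie algebras are the abelian one and the one with basis $e_1,e_2$ and bracket $[e_1,e_2]=e_2$; the latter has $[L,L]=\mathbb{K}e_2$ and $[L,[L,L]]=\mathbb{K}e_2\neq 0$, so its lower central series never reaches $0$ and it is not nilpotent. Hence any two-dimensional nilpotent Lie algebra, in particular any two-dimensional subalgebra of $L$, is abelian.

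For the ``if'' direction, assume that every two-dimensional subalgebra of $L$ is abelian. The key step is to show that $\operatorname{ad} x$ is a nilpotent operator on $L$ for every $x\in L$. Suppose this fails for some $x$. Since $\mathbb{K}$ is algebraically closed, the characteristic polynomial of the non-nilpotent operator $\operatorname{ad} x$ has a nonzero root $\lambda$, so there is $y\in L$, $y\neq 0$, with $[x,y]=\lambda y$. Then $[x,y]=\lambda y\neq 0$, which forces $x$ and $y$ to be linearly independent; hence the subspace $S=\mathbb{K}x+\mathbb{K}y$ is two-dimensional, and since $[x,y]=\lambda y\in S$ it is a subalgebra. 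But $[x,y]\neq 0$, so $S$ is a non-abelian two-dimensional subalgebra, contradicting the hypothesis. Therefore $\operatorname{ad} x$ is nilpotent for all $x\in L$, i.e.\ $L$ consists of $\operatorname{ad}$-nilpotent elements, and Engel's theorem (see \cite{Jacob}) gives that $L$ is nilpotent.

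The only genuine obstacle is packaged into the spectral remark: one needs algebraic closedness of $\mathbb{K}$ to pass from ``$\operatorname{ad} x$ is not nilpotent'' to ``$\operatorname{ad} x$ has a nonzero eigenvalue'' (otherwise the statement can fail), after which Engel's theorem does the work; the remaining arguments are bookkeeping with the lower central series and the classification of two-dimensional Lie algebras.
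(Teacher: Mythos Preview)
Your proof is correct. Note, however, that the paper does not actually prove this lemma: it is stated with a reference to \cite{Jacob}, p.~54, and used as a black box in the proof of Theorem~\ref{Th1}. Your argument is the standard one (and essentially the one indicated in Jacobson): the forward direction follows from the classification of two-dimensional Lie algebras, and the reverse direction uses algebraic closedness to produce a nonzero eigenvalue of a non-nilpotent $\operatorname{ad} x$, yielding a non-abelian two-dimensional subalgebra, after which Engel's theorem finishes. There is nothing further to compare.
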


\begin{lemma}\label{multyplying}
Let  $D_1,D_2\in \Der_{\mathbb K} A$ and  $a,b\in R$. Then
\begin{enumerate}
\item[\rm(a)] $[aD_1,bD_2]=ab[D_1,D_2]+aD_1(b)D_2-bD_2(a)D_1$.
\item[\rm(b)] If  $a,b\in \mathrm{Ker}D_1\cap \mathrm{Ker}D_2$, then  $[aD_1,bD_2]=ab[D_1,D_2]$.
\end{enumerate}
\end{lemma}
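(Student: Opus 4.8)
The plan is to establish (a) by a direct application of the Leibniz rule and then read off (b) as an immediate special case. First I would recall, as noted in the introduction, that each $\mathbb{K}$-derivation of $A$ extends uniquely to a $\mathbb{K}$-derivation of the fraction field $R$ via $D(a/b)=(D(a)b-aD(b))/b^2$; hence $D_1$ and $D_2$ may be treated as derivations of $R$, and for $a,b\in R$ the operators $aD_1$ and $bD_2$ are again $\mathbb{K}$-derivations of $R$. Thus it suffices to compare the two sides of the asserted identity as operators evaluated at an arbitrary element $r\in R$.

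Next I would carry out the computation. Applying the product rule,
\[
(aD_1)\bigl(bD_2(r)\bigr)=aD_1(b)\,D_2(r)+ab\,D_1D_2(r),
\qquad
(bD_2)\bigl(aD_1(r)\bigr)=bD_2(a)\,D_1(r)+ab\,D_2D_1(r),
\]
so that subtracting the second expression from the first and using commutativity of $R$ gives
\[
[aD_1,bD_2](r)=ab\,[D_1,D_2](r)+aD_1(b)\,D_2(r)-bD_2(a)\,D_1(r).
\]
Since $r$ was arbitrary, this is precisely identity (a).

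Finally, for (b) I would simply substitute: if $a,b\in\Ker D_1\cap\Ker D_2$, then $D_1(b)=0$ (because $b\in\Ker D_1$) and $D_2(a)=0$ (because $a\in\Ker D_2$), so the last two summands in (a) vanish and $[aD_1,bD_2]=ab\,[D_1,D_2]$. I do not anticipate any genuine obstacle here: the whole statement is a formal consequence of the Leibniz rule, and the only point deserving a moment's attention is the legitimacy of working over $R$ rather than $A$, which is guaranteed by the extension formula recalled above.
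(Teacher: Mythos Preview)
Your proof is correct and is precisely the ``straightforward check'' the paper alludes to; the paper gives no further detail beyond that phrase, so your expanded computation via the Leibniz rule is exactly what was intended.
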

\begin{proof}
Straightforward check.
\end{proof}

\begin{theorem}\label{Th1}
 Let $\mathbb K$ be an algebraically closed field of characteristic zero and  $A$ an integral $\mathbb K$-domain.
 If $L$ is a finite dimensional  (over $\mathbb K$)  subalgebra of the Lie algebra  $\Der_{\mathbb K} A$  and every element of  $L$ is a locally nilpotent derivation on  $A$, then the Lie algebra  $L$ is nilpotent.
\end{theorem}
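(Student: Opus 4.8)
The plan is to reduce the statement to a question about two-dimensional subalgebras via Lemma~\ref{kryteriy}: since $L$ is finite dimensional over the algebraically closed field $\mathbb{K}$, it is nilpotent if and only if every two-dimensional subalgebra of $L$ is abelian. A two-dimensional non-abelian Lie algebra always admits a basis $D_1,D_2$ with $[D_1,D_2]=D_1$ and $D_1\neq 0$, so it suffices to prove the following: if $D_1,D_2\in\Der_{\mathbb{K}}A$ are locally nilpotent and $[D_1,D_2]=D_1$, then $D_1=0$. (In fact only local nilpotence of $D_2$ will be used.)

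To establish this, I would first record the operator identity $D_1\circ D_2^{\,n}=(D_2+\mathrm{id}_A)^n\circ D_1$ on $A$, valid for all $n\geq 1$; it follows by an immediate induction from $D_1D_2=D_2D_1+[D_1,D_2]=(D_2+\mathrm{id}_A)D_1$. Now fix an arbitrary $a\in A$. Since $D_2$ is locally nilpotent there is an $n$ with $D_2^{\,n}(a)=0$, and the identity then gives $(D_2+\mathrm{id}_A)^n\bigl(D_1(a)\bigr)=D_1\bigl(D_2^{\,n}(a)\bigr)=0$.

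Finally, set $b=D_1(a)$. Again using that $D_2$ is locally nilpotent, the $\mathbb{K}$-subspace $V=\sum_{k\geq 0}\mathbb{K}\,D_2^{\,k}(b)\subseteq A$ is finite dimensional and $D_2$-invariant, and $D_2$ acts nilpotently on $V$; hence $(D_2+\mathrm{id}_A)|_V=\mathrm{id}_V+D_2|_V$ is unipotent, so invertible, and so is each of its powers. Since $b\in V$ and $(D_2+\mathrm{id}_A)^n(b)=0$, we conclude $b=0$, i.e.\ $D_1(a)=0$; as $a$ was arbitrary, $D_1=0$, contradicting $D_1\neq 0$. Thus every two-dimensional subalgebra of $L$ is abelian, and $L$ is nilpotent by Lemma~\ref{kryteriy}. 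The argument is short and has no real obstacle; the only point requiring care is to see that local nilpotence of $D_2$ is what makes both steps work — once to truncate $D_2^{\,n}(a)$, and once to force $V$ to be finite dimensional so that the unipotence of $\mathrm{id}_V+D_2|_V$ can be invoked. The fraction-field facts of Lemma~\ref{principle_1} and Lemma~\ref{principle_12} are not needed here, though an alternative, more computational route would extend $D_1,D_2$ to $R$ and analyze a local slice of $D_1$ using $\Ker\delta_1=\mathrm{Frac}(\Ker D_1)$ and $\trdeg_{\Ker\delta_1}R=1$.
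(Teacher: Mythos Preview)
Your proof is correct and, like the paper's, opens with Lemma~\ref{kryteriy} to reduce to the two-dimensional case; but from there the two arguments diverge. The paper (with the labeling $[D_1,D_2]=D_2$) first shows $\Ker D_1\subseteq\Ker D_2$ via Lemma~\ref{principle_1}(d), then passes to the fraction field $R$ and uses the transcendence-degree statement of Lemma~\ref{principle_1}(c) to upgrade this to $\Ker D_1=\Ker D_2$; finally Lemma~\ref{principle_12} gives nonzero $a,b$ in the common kernel with $aD_1=bD_2$, whence $ab[D_1,D_2]=abD_2=0$ by Lemma~\ref{multyplying}, contradicting that $A$ is a domain. Your route is purely operator-theoretic: the commutation identity $D_1D_2^{\,n}=(D_2+\mathrm{id}_A)^nD_1$ together with the invertibility of $\mathrm{id}+D_2$ on any finite-dimensional $D_2$-stable subspace forces $D_1=0$. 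This is shorter and strictly more elementary: it uses neither the fraction field nor Lemmas~\ref{principle_1},~\ref{principle_12}, and---as you note---only the local nilpotence of $D_2$; in particular it never uses that $A$ is a domain, so your argument extends verbatim to any commutative $\mathbb{K}$-algebra. The paper's approach, by contrast, showcases how the structural facts about $\mathrm{LND}(A)$ (factorially closed kernel, $\trdeg=1$, proportionality of derivations with the same kernel) fit together.
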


\begin{proof}
Let $M$ be any two-dimensional subalgebra    of the Lie algebra $L.$ Our aim is to  prove that  $M$ is abelian. Suppose this is not the case and  choose a basis   $\{ D_1,D_2\}$  of  the non-abelian subalgebra $M$ such that  $[D_1,D_2]=D_2.$   Let us   show that  $\Ker D_1\subseteq \Ker D_2$. Take any element  $f\in \Ker D_1$. Then  $$D_2(f)=[D_1,D_2](f)=(D_1D_2-D_2D_1)(f)=D_1(D_2(f))-D_2(D_1(f))=D_1(D_2(f)),$$
 because  $D_2(D_1(f))=D_2(0)=0$. Since  $D_1\in {\rm LND}(A)$ and  $D_1(D_2(f))=D_2(f)$, then $D_2(f)=0$ (by Lemma ~\ref{cor_Fr}(d)) and  $f\in \Ker D_2$. But then $\Ker D_1\subseteq \Ker D_2,$ because  the element  $f\in \Ker D_1$ is  arbitrarily chosen.

 Let $\delta_1$, $\delta_2$ be extensions of derivations  $D_1$ and $D_2$ respectively on the fraction field  $R=\mathrm{Frac}(A)$, and  let $R_1$, $R_2$ be subfields of constants for  $\delta_1$ and  $\delta_2$ respectively  in $R.$ The nonzero derivations $D_1$, $D_2$ are  locally nilpotent on  $A,$ so by   Lemma ~\ref{cor_1.23}(b) we get   equalities $\mathrm{Frac}(\Ker D_1)=R_1$ and  $\mathrm{Frac}(\Ker D_2)=R_2$.
 The inclusion  $\Ker D_1\subseteq \Ker D_2$ implies  $R_1\subseteq R_2$.
   Note that by Lemma ~\ref{principle_1}(a,c) the subfields $R_1,R_2$ are algebraically closed in the field $R$  and
${\rm tr.deg}_{R_1}R={\rm tr.deg}_{R_2}R=1.$ Then one can easily show  that $R_1=R_2$ and therefore  $\Ker D_1= \Ker D_2$.
Denote   $B=\Ker D_1=\Ker D_2.$   Using Lemma ~\ref{principle_12},  we see that there exist  nonzero elements  $a,b\in B$ such  that  $aD_1=bD_2$. But then we get   $$[aD_1,bD_2]=ab[D_1,D_2]=abD_2=0$$ by Lemma~\ref{multyplying}(b). Since  $D_2\neq 0$ we have  $ab=0.$ This is impossible, because $A$ is an integral domain. This contradiction shows that every two-dimensional subalgebra of the finite dimensional Lie algebra $L$ is abelian. Therefore, $L$ is nilpotent by Lemma \ref{kryteriy}.
\end{proof}
Recall that a Lie algebra $L$ over a field $\mathbb K$  is locally finite (or locally finite dimensional) if every its finitely generated subalgebra is of finite dimension  over $\mathbb K.$ A Lie algebra $L$ is locally nilpotent if every its finitely generated subalgebra is nilpotent.
\begin{corollary}
 Let $L$ be a locally  finite subalgebra of the Lie algebra  $\Der_{\mathbb K}(A).$ If $L\subseteq {\rm LND}(A),$ then the Lie algebra $L$ is locally nilpotent.
\end{corollary}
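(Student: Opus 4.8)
The plan is to reduce everything to Theorem \ref{Th1} by passing to finitely generated subalgebras. By definition, the Lie algebra $L$ is locally nilpotent precisely when every finitely generated subalgebra of $L$ is nilpotent, so it suffices to fix an arbitrary finitely generated subalgebra $L_0\subseteq L$ and prove that $L_0$ is nilpotent.

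First I would invoke the hypothesis that $L$ is locally finite: since $L_0$ is finitely generated, this gives $\dim_{\mathbb K}L_0<\infty$. Next, since $L_0\subseteq L\subseteq {\rm LND}(A)$, every element of $L_0$ is a locally nilpotent derivation of $A$. Hence $L_0$ is a finite dimensional (over $\mathbb K$) subalgebra of $\Der_{\mathbb K}A$ all of whose elements are locally nilpotent, so Theorem \ref{Th1} applies verbatim and yields that $L_0$ is nilpotent.

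Since $L_0$ was an arbitrary finitely generated subalgebra of $L$, it follows that $L$ is locally nilpotent. There is no genuine obstacle here: the corollary is a formal consequence of Theorem \ref{Th1} and the definitions, the only thing to notice being that the phrase \emph{locally finite} is exactly what supplies the finite dimensionality required to invoke the theorem on each finitely generated piece.
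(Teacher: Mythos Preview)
Your proof is correct and matches the paper's intent exactly: the paper does not even write out a proof for this corollary, treating it as an immediate consequence of Theorem~\ref{Th1} together with the definitions of locally finite and locally nilpotent, which is precisely what you have spelled out.
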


\section{On subalgebras of $W_2(\mathbb K)$ consisting of locally nilpotent derivations.}
In this section,  $A=\mathbb K[x,y]$ is the  polynomial ring in two variables over the field $\mathbb K$ and $R=\mathbb K(x,y),$ the  field of rational functions.  $W_2(\mathbb K)$ denotes the Lie algebra $\Der_{\mathbb K} A$ of all $\mathbb K$-derivations of $A$.

\begin{lemma}(\cite{Fr}, Corollary 4.7)\label{loc_nilp_der}
Let $D$ be a derivation of the ring $A=\mathbb K[x,y]$. Then $D$ is locally nilpotent if and only if $D=D_{f(a)}=f'(a)D_a$ for a coordinate polynomial $a\in A$ and some $f\in \mathbb K[t]$.
\end{lemma}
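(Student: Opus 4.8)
The plan is to prove both implications, relying on Rentschler's structure theorem for $\mathrm{LND}(\mathbb K[x,y])$ and on Lemma~\ref{principle_12}.

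For the ``if'' direction, suppose $a$ is a coordinate polynomial, so that $\mathbb K[x,y]=\mathbb K[a,b]$ for a suitable $b$. First I would record the chain-rule identity $D_{f(a)}=f'(a)D_a$: since $\partial(f(a))/\partial x=f'(a)\,\partial a/\partial x$ and likewise in $y$, the first row of the Jacobian matrix $J(f(a),h)$ is $f'(a)$ times the first row of $J(a,h)$, so $\det J(f(a),h)=f'(a)\det J(a,h)$ for every $h$. Next, expressing $D_a$ in the coordinates $(a,b)$ by the matrix chain rule and using that the Jacobian $\det J(a,b)$ of an automorphism is a nonzero constant $c$, one finds $D_a=c\,\partial/\partial b$. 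Since $a\in\Ker D_a$, the coefficient $f'(a)$ also lies in $\Ker D_a$, hence $D_{f(a)}^{\,n}=c^{\,n}\bigl(f'(a)\bigr)^{n}\,\partial^{\,n}/\partial b^{\,n}$ annihilates any given polynomial for $n$ large; thus $D_{f(a)}$ is locally nilpotent.

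For the ``only if'' direction, let $D\neq 0$ be locally nilpotent. By Rentschler's theorem $D$ is conjugate, by an automorphism of $\mathbb K[x,y]$, to a triangular derivation $\alpha\,\partial/\partial x+\beta(x)\,\partial/\partial y$; checking the two cases $\alpha=0$ (where the kernel is $\mathbb K[x]$) and $\alpha\neq 0$ (where $y-\tfrac1\alpha\!\int\!\beta$ lies in the kernel and, together with $x$, forms a coordinate system) shows that the kernel of a triangular derivation is $\mathbb K[w]$ for a coordinate polynomial $w$. Transporting back by the conjugating automorphism gives $\Ker D=\mathbb K[a]$ with $a$ again a coordinate polynomial. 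By the part already proved $D_a$ is locally nilpotent, and working in the coordinates $(a,b)$ one sees $\Ker D_a=\mathbb K[a]=\Ker D$. Lemma~\ref{principle_12} then produces nonzero $p,q\in\mathbb K[a]$ with $pD=qD_a$, so $D=r(a)\,D_a$ as derivations of $R=\mathbb K(x,y)$, where $r=q/p$ is a rational function of one variable. Evaluating both sides at $b$ gives $D(b)=r(a)\,D_a(b)=c\,r(a)$ with $c\in\mathbb K^{\times}$; the left-hand side lies in $\mathbb K[a,b]$ while $r(a)\in\mathbb K(a)$, forcing $r(a)\in\mathbb K[a]$, i.e.\ $r\in\mathbb K[t]$. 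Finally, in characteristic zero $r$ admits an antiderivative $f\in\mathbb K[t]$, and then $D=r(a)\,D_a=f'(a)\,D_a=D_{f(a)}$.

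I expect the genuinely delicate points to be (i) the passage from the fraction-field equality $D=r(a)D_a$ back to a polynomial relation — which is what forces $r$ to be a polynomial, and uses both that $D$ preserves $\mathbb K[x,y]$ and that $b$ is a coordinate — and (ii) making the reduction $\Ker D=\mathbb K[a]$ with $a$ a coordinate polynomial fully rigorous; the remainder is routine bookkeeping with Jacobian determinants and the definition of local nilpotence. A shortcut for (ii) is simply to quote the standard consequence of Rentschler's theorem that the kernel of a nonzero locally nilpotent derivation of $\mathbb K[x,y]$ is a polynomial ring $\mathbb K[a]$ generated by a coordinate.
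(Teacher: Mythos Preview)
The paper does not supply its own proof of this lemma: it is quoted verbatim as Corollary~4.7 of Freudenburg's monograph, so there is nothing in the paper to compare against. Your argument is a correct and self-contained derivation of the result from Rentschler's theorem together with Lemma~\ref{principle_12}, and this is essentially how the statement is obtained in \cite{Fr} as well (Rentschler's theorem is exactly what underlies Corollary~4.7 there). The two ``delicate points'' you flag are handled adequately: for~(ii), the kernel computation for a nonzero triangular derivation is straightforward and the transport by an automorphism preserves the property of being a coordinate polynomial; for~(i), your evaluation at $b$ and the observation $\mathbb K(a)\cap\mathbb K[a,b]=\mathbb K[a]$ is a clean way to force $r\in\mathbb K[t]$. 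One cosmetic remark: you should note explicitly that the case $D=0$ is covered by taking $f$ constant, since your ``only if'' argument assumes $D\neq 0$.
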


\begin{lemma}\label{product}
Let $D_f$, $D_g$ be Jacobian derivations of the ring $A=\mathbb K[x,y]$. Then $[D_f,D_g]=D_{[f,g]}$, where $[f,g]=\det J(f,g)$ is the Jacobian determinant  of polynomials $f,g\in A$.
\end{lemma}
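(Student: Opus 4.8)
The plan is to use that $\Der_{\mathbb K}(\mathbb K[x,y])=W_2(\mathbb K)$ is a free $\mathbb K[x,y]$-module with basis $\frac{\partial}{\partial x},\frac{\partial}{\partial y}$, so a derivation of $A=\mathbb K[x,y]$ is completely determined by its values on the two generators $x$ and $y$. Since the commutator of two derivations is again a derivation, and $D_{[f,g]}$ is a derivation by definition, it suffices to verify the two scalar identities $[D_f,D_g](x)=D_{[f,g]}(x)$ and $[D_f,D_g](y)=D_{[f,g]}(y)$.

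First I would record the coordinate form of a Jacobian derivation. Writing $f_x,f_y$ for the partial derivatives, from $D_f(h)=\det J(f,h)=f_xh_y-f_yh_x$ one reads off $D_f(x)=-f_y$ and $D_f(y)=f_x$, i.e. $D_f=-f_y\frac{\partial}{\partial x}+f_x\frac{\partial}{\partial y}$; likewise, putting $P:=[f,g]=f_xg_y-f_yg_x$, one has $D_P(x)=-P_y$ and $D_P(y)=P_x$.

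Next I would expand the left-hand sides with the Leibniz rule. Using $D_g(x)=-g_y$, $D_f(x)=-f_y$ and then applying $D_f$, $D_g$ in coordinate form,
\[
[D_f,D_g](x)=D_f(-g_y)-D_g(-f_y)=-\bigl(f_xg_{yy}-f_yg_{xy}\bigr)+\bigl(g_xf_{yy}-g_yf_{xy}\bigr),
\]
while differentiating $P$ gives
\[
-P_y=-\bigl(f_{xy}g_y+f_xg_{yy}-f_{yy}g_x-f_yg_{xy}\bigr),
\]
and (using that mixed partials of polynomials commute) the two expressions agree term by term, so $[D_f,D_g](x)=D_P(x)$. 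The computation for the $y$-component is entirely analogous: one compares $[D_f,D_g](y)=D_f(g_x)-D_g(f_x)$ with $P_x$ and again all terms cancel. Hence $[D_f,D_g]=D_{[f,g]}$.

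There is no genuine obstacle here — the statement is a bookkeeping verification — and the only points that require care are the sign convention $D_f=-f_y\frac{\partial}{\partial x}+f_x\frac{\partial}{\partial y}$ and the reduction to checking on the generators $x,y$. I note that one could instead apply both sides to an arbitrary $h\in\mathbb K[x,y]$: since $D_f(h)=[f,h]$, the desired identity $[D_f,D_g](h)=D_{[f,g]}(h)$ becomes $[f,[g,h]]-[g,[f,h]]=[[f,g],h]$, which is precisely the Jacobi identity for the Poisson bracket $[\,\cdot\,,\,\cdot\,]$ on $\mathbb K[x,y]$; but the direct check on $x$ and $y$ above is the shortest self-contained route.
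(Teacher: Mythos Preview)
Your proof is correct and is precisely the ``straightforward check'' that the paper's own proof consists of; you have simply written out the details. The reduction to the generators $x,y$ and the coordinate computation are exactly what is intended, and your alternative remark via the Jacobi identity for the Poisson bracket is also valid.
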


\begin{proof}
Straightforward check.
\end{proof}

\begin{corollary}\label{Engel}
Let $L$ be a  subalgebra of the Lie algebra $W_2(\mathbb K)$. If $L\subseteq {\rm LND}(A),$  then $L$ satisfies the Engel condition, i.e. for any $D_1, D_2\in L$ there exists an integer $k\geq 1$ (depending on $D_1, D_2$) such that $[D_1,\underbrace{D_2,\dots,D_2}_{k}]=0$.
\end{corollary}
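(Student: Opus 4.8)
The plan is to reduce the whole statement to a computation with polynomials via the Jacobian-derivation description of $\mathrm{LND}(\mathbb K[x,y])$. Fix $D_1,D_2\in L$. Since $L\subseteq\mathrm{LND}(A)$, Lemma~\ref{loc_nilp_der} applies to each $D_i$ and produces a coordinate polynomial $a_i$ and $f_i\in\mathbb K[t]$ with $D_i=D_{f_i(a_i)}$; setting $g_i:=f_i(a_i)\in A$ we have $D_1=D_{g_1}$ and $D_2=D_{g_2}$, i.e.\ both generators are Jacobian derivations. This is the only place where the two-variable hypothesis is used.

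Next I would translate the left-normed Lie bracket of derivations into the Jacobian (Poisson) bracket on $A$. Applying Lemma~\ref{product} repeatedly gives
\[
[D_1,\underbrace{D_2,\dots,D_2}_{k}]=D_{[g_1,\underbrace{g_2,\dots,g_2}_{k}]},
\]
so it suffices to exhibit $k\geq 1$ with $[g_1,\underbrace{g_2,\dots,g_2}_{k}]=0$ in $A$. The key observation is that iterating the Jacobian bracket against $g_2$ is, up to sign, iterating the operator $D_{g_2}$: from $D_{g_2}(f)=\det J(g_2,f)=-\det J(f,g_2)=-[f,g_2]$ one gets by induction on $k$ that $[g_1,\underbrace{g_2,\dots,g_2}_{k}]=(-1)^k D_{g_2}^{\,k}(g_1)$.

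Finally, $D_2=D_{g_2}$ lies in $L\subseteq\mathrm{LND}(A)$, hence is locally nilpotent on $A$, and $g_1\in A$; so there is $k\geq 1$ with $D_{g_2}^{\,k}(g_1)=0$. Then $[g_1,\underbrace{g_2,\dots,g_2}_{k}]=0$, and therefore $[D_1,\underbrace{D_2,\dots,D_2}_{k}]=D_0=0$, which is exactly the Engel condition. I do not expect a genuine obstacle here: the two points needing care are the bookkeeping in the repeated application of Lemma~\ref{product} (using that each iterated bracket is again a Jacobian derivation) and the sign in the induction identity $[g_1,\underbrace{g_2,\dots,g_2}_{k}]=(-1)^k D_{g_2}^{\,k}(g_1)$, both of which are short and routine.
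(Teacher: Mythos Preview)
Your proof is correct and follows essentially the same route as the paper: write $D_1=D_{g_1}$, $D_2=D_{g_2}$ via Lemma~\ref{loc_nilp_der}, use Lemma~\ref{product} to reduce the iterated Lie bracket to $D_{[g_1,g_2,\dots,g_2]}$, identify $[g_1,\underbrace{g_2,\dots,g_2}_{k}]=(-1)^kD_{g_2}^{\,k}(g_1)$, and invoke local nilpotence of $D_{g_2}$. The paper's argument is identical in substance, differing only in the amount of detail given for the sign/induction step.
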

\begin{proof}
  Take arbitrary $D_1,D_2\in L$. Since $D_1,D_2\in {\rm LND}(A)$,  Lemma~\ref{loc_nilp_der} implies $D_1=D_f$ and $D_2=D_g$ for some $f,g\in A$. It follows from Lemma~\ref{product} that $[D_1,D_2]=D_{[f,g]}=-D_g(f),$  where $[f,g]=\det J(f,g)$ is the Jacobian determinant of $f,g\in A$. Further, $$[D_1,\underbrace{D_2,\dots,D_2}_{k}]=D_h,$$ where $h=[\dots[[f,\underbrace{g],g],\dots,g]}_{k}=[f,\underbrace{g,g,\dots,g}_{k}]$.
It is easy to check that $$[f,\underbrace{g,g,\dots,g}_{k}]=(-1)^kD_g^k(f).$$ Since $D_g$ is locally nilpotent, we get $D_g^k(f)=0$ for a sufficiently large $k$. The latter means that $[D_1,\underbrace{D_2,\dots,D_2}_{k}]=0$.
\end{proof}
\begin{lemma}\label{dependence}
Let $D_1$, $D_2$ be locally nilpotent derivations of the ring $A=\mathbb K[x,y]$.
\item[(1)]\label{lin_depend}If $D_1$ and $D_2$ are linearly dependent over $A$, then there exists a coordinate polynomial $a\in A$ such that $D_1=D_{f(a)}$, $D_2=D_{g(a)}$ for some $f,g\in \mathbb K[t]$.
\item[(2)]\label{coord_pair} If $D_1$ and $D_2$ are linearly independent over $A$ and $[D_1,D_2]=0$, then there exists a coordinate pair $(a,c)$ such that $D_1=D_a$, $D_2=D_c$.
\end{lemma}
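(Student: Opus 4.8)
The plan is to reduce both assertions, via Lemma~\ref{loc_nilp_der}, to statements about Jacobian derivations $D_a$ attached to \emph{coordinate} polynomials, and then to exploit the elementary fact that for a coordinate polynomial $a\in A$ one has $\Ker D_a=\mathbb K[a]$. This fact I would establish first: completing $a$ to a coordinate pair $(a,b)$ gives $\mathbb K[x,y]=\mathbb K[a,b]$ and $D_a(a)=0$, $D_a(b)=\det J(a,b)\in\mathbb K^{*}$ (the Jacobian of an automorphism of $\mathbb K[x,y]$ is a nonzero constant); expressing an arbitrary $h$ as a polynomial in $a,b$ and applying the chain rule then shows $D_a(h)=0$ if and only if $h\in\mathbb K[a]$. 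By Lemma~\ref{loc_nilp_der} I may write $D_1=f'(a_1)D_{a_1}$ and $D_2=g'(a_2)D_{a_2}$ with $a_1,a_2\in A$ coordinate polynomials and $f,g\in\mathbb K[t]$; if $D_1=0$ or $D_2=0$ both assertions are checked directly, so I assume $D_1,D_2\ne 0$, whence $f,g$ are non-constant and, since $\mathbb K$ has characteristic zero and each $a_i$ is transcendental over $\mathbb K$, the elements $f'(a_1),g'(a_2)\in A$ are nonzero.

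For part (1), linear dependence over $A$ means $\alpha D_1=\beta D_2$ for some $\alpha,\beta\in A$, necessarily both nonzero; since $A$ is a domain this yields $D_1(h)=0\Leftrightarrow D_2(h)=0$, i.e.\ $\mathbb K[a_1]=\Ker D_{a_1}=\Ker D_1=\Ker D_2=\Ker D_{a_2}=\mathbb K[a_2]$. From $a_2\in\mathbb K[a_1]$ and $a_1\in\mathbb K[a_2]$ a degree count in $\mathbb K[a_1]\cong\mathbb K[t]$ forces $a_2=\lambda a_1+\mu$ with $\lambda\in\mathbb K^{*}$, $\mu\in\mathbb K$, hence $D_{a_2}=\lambda D_{a_1}$. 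Setting $a:=a_1$, $\widetilde f:=f$ and $\widetilde g(t):=g(\lambda t+\mu)$ we get $D_1=D_{\widetilde f(a)}$ and $D_2=\lambda g'(\lambda a+\mu)D_a=\widetilde g{}'(a)D_a=D_{\widetilde g(a)}$, which is the claim.

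For part (2), Lemma~\ref{product} gives $0=[D_1,D_2]=D_{[f(a_1),g(a_2)]}$; since $[f(a_1),g(a_2)]=\det J(f(a_1),g(a_2))=f'(a_1)g'(a_2)\det J(a_1,a_2)$ and the map $h\mapsto D_h$ has kernel $\mathbb K$, the element $w:=f'(a_1)g'(a_2)\det J(a_1,a_2)$ lies in $\mathbb K$. If $w=0$, then (as $A$ is a domain and $f'(a_1),g'(a_2)\ne 0$) $\det J(a_1,a_2)=0$, so $a_1\in\Ker D_{a_2}=\mathbb K[a_2]$, and arguing as in (1) we would get $a_1=\lambda a_2+\mu$, making $D_1,D_2$ linearly dependent over $A$ --- excluded by hypothesis. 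Hence $w\in\mathbb K^{*}$, and since $\mathbb K[x,y]$ is a UFD each factor is a unit: $f'(a_1)=\alpha\in\mathbb K^{*}$, $g'(a_2)=\gamma\in\mathbb K^{*}$, $\det J(a_1,a_2)\in\mathbb K^{*}$. Thus $f,g$ are affine-linear, $D_1=\alpha D_{a_1}=D_{\alpha a_1}$ and $D_2=\gamma D_{a_2}=D_{\gamma a_2}$, and it remains to see that $(\alpha a_1,\gamma a_2)$ is a coordinate pair. Completing $a_1$ to a coordinate pair $(a_1,b)$ and writing $a_2=B(a_1,b)$, the chain rule gives $\det J(a_1,a_2)=\tfrac{\partial B}{\partial b}\det J(a_1,b)$ with $\det J(a_1,b)\in\mathbb K^{*}$, so $\tfrac{\partial B}{\partial b}\in\mathbb K^{*}$ and $a_2=\beta b+p(a_1)$ for some $\beta\in\mathbb K^{*}$, $p\in\mathbb K[t]$; then $b\in\mathbb K[a_1,a_2]$, so $\mathbb K[a_1,a_2]=\mathbb K[a_1,b]=\mathbb K[x,y]$, and likewise $\mathbb K[\alpha a_1,\gamma a_2]=\mathbb K[x,y]$. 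Taking $(a,c):=(\alpha a_1,\gamma a_2)$ then gives $D_1=D_a$, $D_2=D_c$ with $(a,c)$ a coordinate pair, finishing the proof.

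I expect the delicate step to be the last one in case (2): inferring that $(a_1,a_2)$ is a coordinate pair from the single scalar condition $\det J(a_1,a_2)\in\mathbb K^{*}$. The key observation is that one of the two polynomials, namely $a_1$, is \emph{already known} to be a coordinate, so the argument reduces to the chain rule in the coordinate system $(a_1,b)$ and never appeals to the (generally open) two-dimensional Jacobian conjecture.
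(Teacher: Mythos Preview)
Your proof is correct and follows essentially the same route as the paper's: both parts rest on Lemma~\ref{loc_nilp_der}, the identity $\Ker D_a=\mathbb K[a]$ for a coordinate polynomial $a$, and (for part~(2)) the chain-rule computation in the coordinate system $(a_1,b)$ showing that $\det J(a_1,a_2)\in\mathbb K^{*}$ forces $a_2=\beta b+p(a_1)$. The only cosmetic differences are that you argue the case $w=0$ directly via kernels (the paper cites \cite[Corollary~7.2.10]{Nowicki}) and that in part~(1) you pin down $a_2=\lambda a_1+\mu$ explicitly, whereas the paper is content with $c=\varphi(a)$ for some $\varphi\in\mathbb K[t]$.
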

\begin{proof}
Since $D_1\in {\rm LND} (A)$, Lemma~\ref{loc_nilp_der} implies that $D_1=D_{f(a)}$ for a coordinate pair $(a,b)$ and some $f\in \mathbb K[t]$.
Similarly, since $D_2\in {\rm LND}(A)$, there exists a coordinate pair $(c,d)$ such that $D_2=D_{g(c)}$ for some $g\in \mathbb K[t]$.
\item[(1)] Let $r_1D_1+r_2D_2=0$ for some $r_1,r_2\in A$, and at least one of $r_1,r_2$ is nonzero. Without loss of generality, one can assume that $D_1\neq 0$ and $D_2\neq 0$. Then it obviously holds the equality $\Ker D_1=\Ker D_2$. Since $\Ker D_1=\mathbb K[a]$ and $\Ker D_2=\mathbb K[c],$ we get $\mathbb K[a]=\mathbb K[c].$  Then $c=\varphi(a)$ for some $\varphi \in \mathbb K[t]$, and $D_2=D_{g(c)}=D_{g(\varphi (a))}=D_{g_1(a)}$.
\item[(2)] Let $D_1, D_2\in {\rm LND} (A)$ be linearly independent over $A$ and $[D_1,D_2]=0$. By Lemma~\ref{product}, it holds
$$[D_1,D_2]=[D_{f(a)},D_{g(c)}]=D_{[f(a),g(c)]}=0,$$ where $[f(a),g(c)]=\det J(f(a),g(c))$. It is easy to check that $[f(a),g(c)]=f'(a)g'(c)[a,c].$ Then we get $$D_{[f(a),g(c)]}=D_{f'(a)g'(c)[a,c]}=0,$$ and hence  $f'(a)g'(c)[a,c]\in \mathbb K$.

Let us show that $f'(a)g'(c)[a,c]\in \mathbb K^*$. Indeed, if $f'(a)g'(c)[a,c]=0$ then $[a,c]=0$, because $f'(a)\neq 0$, $g'(c)\neq 0$ (note that $\deg f \geq 1$ and $\deg g \geq 1$). The equality $[a,c]=0$ implies that $D_a$ and $D_c$ are linearly dependent over $A$ (see \cite[Corollary 7.2.10]{Nowicki}). This contradicts our assumption.

Therefore, $f'(a)g'(c)[a,c]\in \mathbb K^*$ and especially $[a,c]\in \mathbb K^*$. Since $(a,b)$ is a coordinate pair, there exists a polynomial $p(u,v)\in \mathbb K[u,v]$ such that $c=p(a,b)$. It follows $$[a,c]=[a,p(a,b)]=\frac{\partial}{\partial b}(p(a,b))[a,b]\in \mathbb K^*.$$ Thus, $ \frac{\partial}{\partial b}(p(a,b)\in \mathbb K^*$. This implies that $c=p(a,b)=\mu b+q(a)$ for some $\mu\in \mathbb K^*$ and $q\in \mathbb K[t]$.
Since the polynomials $a$ and $\mu b+q(a)$ form a coordinate pair in $A=\mathbb K[x, y],$ we get that $(a, c)$ is also a coordinate pair in $A.$
Furthermore, from the relation $f'(a)g'(c)[a,c]\in \mathbb K^*$ we get  $\deg f=\deg g=1$. Write $f(t)=\alpha t+\beta$,
$g(t)=\gamma t+\delta$ for $\alpha, \beta, \gamma, \delta\in\mathbb K$, $\alpha,\gamma\neq 0$. Then $D_1=D_{\alpha a+\beta}$ and $D_2=D_{\gamma b+\delta}$. Clearly,  $\alpha a+\beta$ and $\gamma b+\delta$ form a coordinate pair in $A$. Without loss of generality, we may denote $\alpha a+\beta$ by $a$, $\gamma b+\delta$ by $c$, and get $D_1=D_a$, $D_2=D_c$, where $(a,c)$ is a coordinate pair in $A$.
\end{proof}
\begin{lemma}\label{abelian}
Let $L$ be an abelian subalgebra of the Lie algebra $W_2(\mathbb K)$ (not necessarily finite dimensional over $\mathbb K$). If $L\subseteq {\rm LND}(A),$ then  $L$ is one of the following algebras:
\item[(1)] $L=\mathbb K\big\langle \{f_i(a)D_a\}_{i\in I}\big\rangle$, where $\{ f_i(t)\in \mathbb K[t], i\in I\}$ is a finite or a countable infinite  set of polynomials that  are linearly independent over $\mathbb K,$  and $a\in A$ is a coordinate polynomial.
\item[(2)] $L=\mathbb K\langle D_a,D_b \rangle$, where $(a,b)$ is a coordinate pair in $A$.
\end{lemma}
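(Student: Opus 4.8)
The plan is to argue by the rank of $L$ as a submodule of the free $\mathbb K[x,y]$-module $W_2(\mathbb K)$. If $L=0$ there is nothing to prove (it is of type (1) with an empty family $\{f_i\}$). Since $W_2(\mathbb K)$ is free of rank $2$, hence torsion-free, a nonzero $L$ has rank $1$ or $2$ over $A=\mathbb K[x,y]$, and I expect these two cases to produce exactly types (1) and (2).

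\emph{Rank $1$.} Fix a nonzero $D_1\in L$. By Lemma~\ref{loc_nilp_der} there are a coordinate polynomial $a\in A$ and $f_1\in\mathbb K[t]$ with $D_1=D_{f_1(a)}=f_1'(a)D_a$, and since $A$ is a domain and $f_1'(a)\neq0$ one gets $\Ker D_1=\Ker D_a=\mathbb K[a]$. Now take any nonzero $D\in L$. Again by Lemma~\ref{loc_nilp_der}, $D=h_0'(c)D_c$ for a coordinate polynomial $c$, so $\Ker D=\mathbb K[c]$; on the other hand $D$ and $D_1$ are linearly dependent over $A$, which (as in the proof of Lemma~\ref{dependence}(1)) forces $\Ker D=\Ker D_1$, i.e.\ $\mathbb K[c]=\mathbb K[a]$. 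Hence $c=\alpha a+\beta$ with $\alpha\in\mathbb K^{*}$, so $D_c=\alpha D_a$ and therefore $D=h(a)D_a$ for some $h\in\mathbb K[t]$. Thus $D\mapsto h$ is a well-defined, $\mathbb K$-linear, injective map from $L$ into $\mathbb K[t]$ (injectivity since $D_a\neq0$, $A$ is a domain, and $a$ is transcendental over $\mathbb K$). As $\dim_{\mathbb K}\mathbb K[t]=\aleph_0$, the image is a subspace admitting a finite or countably infinite basis $\{f_i\}_{i\in I}$; these polynomials are linearly independent over $\mathbb K$, and pulling back gives $L=\mathbb K\big\langle\{f_i(a)D_a\}_{i\in I}\big\rangle$, i.e.\ type (1).

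\emph{Rank $2$.} Choose $D_1,D_2\in L$ linearly independent over $A$. Since $L$ is abelian, $[D_1,D_2]=0$, so by Lemma~\ref{dependence}(2) there is a coordinate pair $(a,b)$ with $D_1=D_a$, $D_2=D_b$. Work in the coordinate system $(a,b)$, so that $A=\mathbb K[a,b]$, $W_2(\mathbb K)=\mathbb K[a,b]\frac{\partial}{\partial a}\oplus\mathbb K[a,b]\frac{\partial}{\partial b}$, and, $\lambda:=\det J(a,b)=[a,b]$ being a nonzero constant (the Jacobian of an automorphism is a unit of $\mathbb K[x,y]$), one has $D_a=\lambda\frac{\partial}{\partial b}$ and $D_b=-\lambda\frac{\partial}{\partial a}$. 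For an arbitrary $D=P\frac{\partial}{\partial a}+Q\frac{\partial}{\partial b}\in L$ with $P,Q\in\mathbb K[a,b]$, the relation $[D,D_a]=0$ gives $\frac{\partial P}{\partial b}=\frac{\partial Q}{\partial b}=0$ and $[D,D_b]=0$ gives $\frac{\partial P}{\partial a}=\frac{\partial Q}{\partial a}=0$; hence $P,Q\in\mathbb K$ and $D\in\mathbb K\langle D_a,D_b\rangle$. Combined with the obvious inclusion $\mathbb K\langle D_a,D_b\rangle\subseteq L$, this yields $L=\mathbb K\langle D_a,D_b\rangle$, i.e.\ type (2).

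The bracket computations in the two coordinate systems and the identifications of kernels are routine. The step needing the most care is the rank-$1$ case: one must exhibit a single coordinate polynomial $a$ that works simultaneously for every derivation in $L$, and then deal with a possibly infinite-dimensional $L$ via the embedding $L\hookrightarrow\mathbb K[t]$. One should also be a little careful to justify that $\det J(a,b)\in\mathbb K^{*}$ for a coordinate pair $(a,b)$, which is already implicit in the proof of Lemma~\ref{dependence}(2).
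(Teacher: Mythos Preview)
Your proof is correct and follows essentially the same approach as the paper: split by $\rk_A L\in\{1,2\}$, use Lemma~\ref{loc_nilp_der} and Lemma~\ref{dependence}(1) in the rank~$1$ case to show $L\subseteq\mathbb K[a]D_a$ for a single coordinate polynomial $a$, and use Lemma~\ref{dependence}(2) together with bracketing against $D_a,D_b$ in the rank~$2$ case to force scalar coefficients. The only cosmetic differences are that the paper writes an arbitrary element as $fD_a+gD_b$ and uses $\Ker D_a\cap\Ker D_b=\mathbb K$, whereas you pass to the basis $\partial/\partial a,\partial/\partial b$, and in rank~$1$ the paper simply writes $c=\varphi(a)$ for some $\varphi\in\mathbb K[t]$ rather than observing that $\varphi$ must be linear.
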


\begin{proof}
Let ${\rm rk}_A L=1$ and $D\in L$ is nonzero. By Lemma~\ref{loc_nilp_der}, there exists a coordinate polynomial $a\in A$ such that $D=D_{f(a)}=f'(a)D_a$ for some $f\in \mathbb K[t]$. Take an arbitrary $D_1\in L$. Then $D_1$ and $D$ are linearly dependent over $A$. By Lemma~\ref{lin_depend}(1), there exists $g\in \mathbb K[t]$ such that $D_1=D_{g(a)}=g'(a)D_a$. Thus, $L\subseteq \mathbb K[a]D_a$. Since $\mathbb K[a]D_a$ has a countable basis over $\mathbb K$, we can find a finite or a countable infinite basis $\{f_i(a)D_a\}_{i\in I}$ of the Lie algebra $L$. We see that $L$ is of type 1).

Now let ${\rm rk}_A L=2$. Take arbitrary $D_1,D_2\in L$ that are  linearly independent over $A$. Since the Lie algebra $L$ is abelian, $[D_1,D_2]=0$. By Lemma~\ref{coord_pair}(2), there exists a coordinate pair $(a,b)\in A$ such that $D_1=D_a$, $D_2=D_b$. Then for every $D=fD_a+gD_b\in L$, where $f,g\in A$, we have $$0=[D_a, D]=D_a(f)D_a+D_a(g)D_b,$$ $$0=[D_b, D]=D_b(f)D_a+D_b(g)D_b.$$
Since $D_a$ and $D_b$ are linearly independent over $A$, we have $D_a(f)=D_a(g)=0$ and $D_b(f)=D_b(g)=0$. These equalities imply that $f,g\in \Ker D_a\cap \Ker D_b=\mathbb K$. Therefore, $L=\mathbb K\langle D_a,D_b \rangle$.
\end{proof}

\begin{lemma}\label{rank_two}
Let $L$ be a subalgebra of rank $2$ over $A$ of the Lie algebra $W_2(\mathbb K).$  If $L\subseteq {\rm LND}(A),$  then there exist linearly independent (over $A$) elements $D_1,D_2\in L$ such that $[D_1,D_2]=0$. Moreover, there exists a coordinate pair $(a,b)\in A$ such that $D_1=D_a, D_2=D_b.$
\end{lemma}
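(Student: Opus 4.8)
The plan is to produce two derivations in $L$ that are linearly independent over $A$ and commute, and then to finish via Lemma~\ref{coord_pair}(2). Since $\rk_A L=2$, choose $D_1,D_2\in L$ linearly independent over $A$; if $[D_1,D_2]=0$ there is nothing more to prove. Otherwise the Engel property (Corollary~\ref{Engel}) lets one take the least $k\ge 2$ with $[D_1,\underbrace{D_2,\dots,D_2}_{k}]=0$ and put $D:=[D_1,\underbrace{D_2,\dots,D_2}_{k-1}]$ and $D':=[D_1,\underbrace{D_2,\dots,D_2}_{k-2}]$ (so $D'=D_1$ when $k=2$). Then $D,D'$ are nonzero elements of $L$, $D=[D',D_2]$, and $[D,D_2]=0$. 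If $D$ and $D_2$ are linearly independent over $A$, then Lemma~\ref{coord_pair}(2) applied to the commuting pair $(D,D_2)$ already gives the assertion. The real work lies in the remaining case, in which $D$ and $D_2$ are linearly \emph{dependent} over $A$.

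In that case I would first apply Lemma~\ref{lin_depend}(1) to the linearly dependent locally nilpotent derivations $D$ and $D_2$: it yields a coordinate polynomial $c$ with $D=S'(c)D_c$ and $D_2=f_2'(c)D_c$ for some $S,f_2\in\mathbb K[t]$, so that $\Ker D_2=\mathbb K[c]$. The crux is then to pin down $D'$. Writing $D'=D_p$ (Lemma~\ref{loc_nilp_der}), one has $D=[D',D_2]=D_{f_2'(c)[p,c]}$ by Lemma~\ref{product} and the chain rule, so comparing with $D=D_{S(c)}$ gives $f_2'(c)[p,c]\in\mathbb K[c]$; since $\mathbb K[c]=\Ker D_c$ is factorially closed in $A$ (Lemma~\ref{principle_1}(a)) and $[p,c]\ne 0$, this forces $[p,c]\in\mathbb K[c]$, i.e. $D_c(p)\in\mathbb K[c]$. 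Now $\mathbb K[c]$ is $D'$-invariant, hence $D'|_{\mathbb K[c]}$ is a locally nilpotent derivation of the one-variable polynomial ring $\mathbb K[c]$, and therefore a scalar multiple of $d/dc$; this improves $D_c(p)\in\mathbb K[c]$ to $D_c(p)\in\mathbb K^{*}$. Fixing a coordinate partner $b$ of $c$ (so that $[c,b]=\lambda_0\in\mathbb K^{*}$), it follows that $p=\lambda b+\sigma(c)$ with $\lambda\in\mathbb K^{*}$ and $\sigma\in\mathbb K[t]$, that is, $D'=\lambda D_b+\sigma'(c)D_c$.

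With $D'$ in this explicit triangular form, an induction based on Lemma~\ref{multyplying}(a) (using $[D_b,D_c]=0$ and $D_b(g(c))=-\lambda_0 g'(c)$ for $g\in\mathbb K[t]$) shows that the $j$-fold iterated bracket $[D',[D',\dots,[D',D_2]\dots]]$ equals $(-\lambda\lambda_0)^{j}f_2^{(j+1)}(c)D_c$, which therefore lies in $L$ for every $j\ge 0$. Since $D=[D',D_2]\ne 0$ we have $m:=\deg f_2\ge 2$, and taking $j=m-1$ the factor $f_2^{(m)}(c)$ is a nonzero constant, whence $D_c\in L$. Finally, set $b':=\lambda b+\sigma(c)$; then $D'=D_{b'}$, the pair $(c,b')$ is a coordinate pair (because $b\in\mathbb K[c,b']$), $[D_c,D_{b'}]=D_{[c,b']}=D_{\lambda\lambda_0}=0$, and $D_c,D_{b'}$ are linearly independent over $A$ since $[c,b']=\lambda\lambda_0\ne 0$. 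Thus $D_c$ and $D_{b'}$, together with the coordinate pair $(c,b')$, are the required objects, which completes the proof.

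The genuinely delicate step is the linearly dependent case, in which one must squeeze out of the single identity $D=[D',D_2]$, together with local nilpotency of $D'$, enough rigidity to force $D'=\lambda D_b+\sigma'(c)D_c$. The two crucial inputs are the factorial closedness of $\Ker D_c=\mathbb K[c]$ in $A$, which upgrades $f_2'(c)[p,c]\in\mathbb K[c]$ to $[p,c]\in\mathbb K[c]$, and the fact that every locally nilpotent derivation of a one-variable polynomial ring is a scalar multiple of $d/dt$, which then forces $[p,c]\in\mathbb K^{*}$; once $D'$ is determined, the explicit bracket computation producing $D_c\in L$ is routine.
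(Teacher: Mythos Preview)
Your argument is correct and follows the same overall architecture as the paper's proof: start from two $A$-independent elements, run the Engel iteration, and in the delicate case where the last nonzero iterate is $A$-dependent on the ``pivot'' derivation, show that the relevant Jacobian lies in $\mathbb K^{*}$, upgrade to a coordinate pair, and then iterate brackets once more to exhibit both Jacobian derivations inside $L$.

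The one substantive difference is in how you establish $[p,c]\in\mathbb K^{*}$. The paper writes $g'(c)f'(a)[c,a]=h(a)+\gamma$, factors the right-hand side as $\mu\prod_i(a-\alpha_i)$ over the algebraically closed field, and then argues that if some irreducible $a-\alpha_i$ divided $[c,a]$ one would get $D_c(a-\alpha_i)=(a-\alpha_i)u$, contradicting Lemma~\ref{principle_1}(d). You instead use that $\Ker D_c=\mathbb K[c]$ is factorially closed (Lemma~\ref{principle_1}(a)) to pass from $f_2'(c)[p,c]\in\mathbb K[c]$ to $[p,c]\in\mathbb K[c]$, and then observe that $D'$ restricts to a locally nilpotent derivation of the one-variable ring $\mathbb K[c]$, hence $D'(c)=[p,c]$ is a constant (and nonzero since $D'$ and $D_2$ are $A$-independent). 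This is a clean variant that avoids the explicit factorization and, incidentally, does not invoke algebraic closedness at that step. After that point your iterated-bracket computation producing $D_c\in L$ and the passage to the coordinate pair $(c,b')$ with $b'=\lambda b+\sigma(c)$ match the paper's construction of $\widetilde D$ and $D_{s-1}$ essentially verbatim, with the roles of the two original derivations interchanged.
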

\begin{proof}
  Take any elements $D_1,D_2\in L$ that are linearly independent over $A$. Consider inductively defined elements $D_{k+1}=[D_k, D_1]\in L$ for $k\geq 2$. By Corollary \ref{Engel}, there exists the least number $s$, $s\geq 2$, such that $D_{s+1}=0$. If $D_1$ and $D_s$ are linearly independent over $A$, then we denote $D_s$ by $D_2$ and all is done. Assume that $D_1, D_s$ are linearly dependent over $A$. By Lemma~\ref{lin_depend}(1), there exists a coordinate polynomial $a\in A$ such that $D_1=D_{f(a)}$, $D_s=D_{h(a)}$ for some $f,h\in \mathbb K[t]$. Since $D_{s-1}\in {\rm LND}(A)$, Lemma~\ref{loc_nilp_der} implies that there exists a coordinate polynomial $c\in A$ such that  $D_{s-1}=D_{g(c)}$ for some $g\in \mathbb K[t]$.
Note that $D_1$ and $D_{s-1}$ are linearly independent over $A$. Indeed, in the opposite case $D_1=D_{f_1(d)}$ and $D_{s-1}=D_{g_1(d)}$ for some  coordinate polynomial $d\in A$ and $f_1,g_1\in \mathbb K[t]$ (see Lemma \ref{dependence}).  By Lemma~\ref{product}, $$[D_{g_1(d)},D_{f_1(d)}]=D_{g'_1(d)f'_1(d)[d,d]}=0,$$ and thus $[D_{s-1},D_1]=D_s=0$. This contradicts our choice of $D_s$.

It follows from  linear independence of $D_1$ and $D_{s-1}$  that $\det J(a,c)=[a,c]\neq 0$ (see, for example,  \cite[Corollary 7.2.10]{Nowicki}). Further, we have   $$D_s=[D_{s-1},D_1]=[D_{g(c)},D_{f(a)}]=D_{[g(c),f(a)]}=D_{g'(c)f'(a)[c,a]},$$  and since $D_s=D_{h(a)}$ we get  $$g'(c)f'(a)[c,a]=h(a)+\gamma. \eqno(1)$$ for some $\gamma \in \mathbb K.$
 The field $\mathbb K$ is algebraically closed, so  we have $$h(a)+\gamma=\mu(a-\alpha_1)(a-\alpha_2)\dots(a-\alpha_k),$$  where   $\mu\in \mathbb K^{\star}$ and $\alpha_1,\alpha_2,\dots,\alpha_n$ are all the roots of the polynomial $h(a)+\gamma$. Rewrite the  equality (1)  in the form
$$g'(c)f'(a)[c,a]=\mu(a-\alpha_1)(a-\alpha_2)\dots(a-\alpha_k). \eqno(2).$$
The polynomial $a$ is coordinate and thus   all the  polynomials $a-\alpha_i$, $i=1,2,\dots, k$   are irreducible.

 Let us show    that $[c,a]\in \mathbb K^*$. It was mentioned above that $[c, a]\not =0.$ Assume that $[c, a]\in \mathbb K[x, y]\setminus \mathbb K.$  It follows from (2) that $[c, a]$ is divided by some $a-\alpha _i,$ assume by
 $a-\alpha_1$. Then  $[c, a]=D_c(a)=(a-\alpha_1)u(x,y)$ for some $u(x,y)\in \mathbb K[x,y] .$  It is obvious that
$D_c(a-\alpha_1)=(a-\alpha _1)u(x, y) $  and hence $D_c(a-\alpha _1)=0=[c, a]$ (see, Lemma~\ref{principle_1}(d)). Contradiction.
 Thus $[c,a]\in \mathbb K^{\star}.$  It follows from this relation that $[D_c,D_a]=D_{[c,a]}=0$ and  $D_a$ and $D_c$ are linearly independent over $A.$ By Lemma \ref{coord_pair}, we see that  $(a,c)$ is a coordinate pair for  $A=\mathbb K[x, y].$

 We now find an element $\widetilde {D}$ such that $D_{s-1}$ and $\widetilde {D}$ are linearly independent over $A$ and $[\widetilde {D}, D_{s-1}]=0.$
It follows from the equality~(2) that $g'(c)$ is a nonzero constant, because $\mu(a-\alpha_1)(a-\alpha_2)\dots(a-\alpha_k)$ is divided by $g'(c)$ and the polynomials $a, c$ are algebraically independent over $\mathbb K.$ 
  Thus $g(c)=\beta c +\sigma$ for some $\beta \in \mathbb K^*$, $\sigma \in \mathbb K$ and $D_{s-1}=D_{g(c)}=D_{\beta c+\sigma}$. Without loss of generality, we may assume that $D_{s-1}=D_c$ and $[a,c]=1$. Denote $\deg f(t)$ by $m$ (recall that $D_1=D_{f(a)}$ ). Then put
$$\widetilde{D}=[D_1,\underbrace{D_{s-1},\dots, D_{s-1}}_{m-1}]=D_{f^{(m-1)}(a)[a,c]}\in L,$$ where $f^{(m-1)}(t)$ is the $(m-1)$-th derivative of $f(t)$. Since $\deg f(t)=m$, we get $\widetilde{D}=D_{\delta a+\tau}$ for some $\delta \in \mathbb K^*$, $\tau \in \mathbb K$ and may assume that $\widetilde D=D_a$. Therefore, $D_{s-1}$ and $\widetilde{D}$ are linearly independent over $A$. Moreover, $[\widetilde D,D_{s-1}]=D_{\delta}=0$. Thus by the denoting $D_1=\widetilde D$ and $D_2=D_{s-1}$, we get desired derivations, and $(a,c)$ is the desired coordinate pair in $A$.
\end{proof}
\begin{lemma}\label{automorphism}
Let $L$ be a subalgebra of rank $2$ over $A$ of the Lie algebra $W_2(\mathbb K).$ If $L\subseteq {\rm LND}(A)$  and $\dim _{\mathbb K}L\geq 3,$ then there  exists an automorphism $\theta$ of the ring $A$ such that   \ $\theta L\theta ^{-1}$ contains the elements $ \frac{\partial}{\partial x}, \frac{\partial}{\partial y}, x\frac{\partial}{\partial y}.$
\end{lemma}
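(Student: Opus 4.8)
The plan is to use Lemma~\ref{rank_two} to conjugate a commuting, $A$-independent pair inside $L$ onto the coordinate partial derivatives, and then to exploit the hypothesis $\dim_{\mathbb{K}}L\geq 3$ to produce inside $L$ an affine derivation whose linear part is a nonzero nilpotent $2\times 2$ matrix; such a matrix is carried to $x\frac{\partial}{\partial y}$ by a linear change of coordinates.

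First I would invoke Lemma~\ref{rank_two}: there are $D_1=D_a$ and $D_2=D_b$ in $L$, linearly independent over $A$, with $(a,b)$ a coordinate pair. Then $\mathbb{K}[a,b]=\mathbb{K}[x,y]$, so there is an automorphism $\psi$ of $A$ with $\psi(a)=x$ and $\psi(b)=y$. A chain-rule computation (using that the Jacobian determinant of an automorphism of $\mathbb{K}[x,y]$ is a nonzero constant $c_{\psi}$) gives the covariance relation $\psi D_f\psi^{-1}=c_{\psi}^{-1}D_{\psi(f)}$ for all $f\in A$; hence $\psi D_a\psi^{-1}$ and $\psi D_b\psi^{-1}$ are nonzero scalar multiples of $D_x=\frac{\partial}{\partial y}$ and $D_y=-\frac{\partial}{\partial x}$, respectively. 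Since conjugation by $\psi$ preserves rank over $A$, $\mathbb{K}$-dimension, and the property of consisting of locally nilpotent derivations, I may replace $L$ by $\psi L\psi^{-1}$ and assume henceforth that $V:=\mathbb{K}\langle\frac{\partial}{\partial x},\frac{\partial}{\partial y}\rangle\subseteq L$.

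Because $\dim_{\mathbb{K}}L\geq 3$, the set $L\setminus V$ is nonempty, so I pick $D=f\frac{\partial}{\partial x}+g\frac{\partial}{\partial y}\in L\setminus V$ with $n:=\max(\deg f,\deg g)$ as small as possible (note $n\geq 1$). The key step is to show $n=1$: the derivations $[\frac{\partial}{\partial x},D]=\frac{\partial f}{\partial x}\frac{\partial}{\partial x}+\frac{\partial g}{\partial x}\frac{\partial}{\partial y}$ and $[\frac{\partial}{\partial y},D]=\frac{\partial f}{\partial y}\frac{\partial}{\partial x}+\frac{\partial g}{\partial y}\frac{\partial}{\partial y}$ lie in $L$ and have degree $\leq n-1<n$, so minimality of $n$ forces them into $V$; hence every first partial of $f$ and of $g$ is a constant, so $f$ and $g$ are affine and $n\leq 1$. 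Subtracting from $D$ the suitable constant-coefficient vector field (an element of $V\subseteq L$) yields $D_0=(\alpha_1 x+\alpha_2 y)\frac{\partial}{\partial x}+(\beta_1 x+\beta_2 y)\frac{\partial}{\partial y}\in L$, nonzero since $D\notin V$. As $D_0\in L\subseteq{\rm LND}(A)$ is a linear vector field, local nilpotency forces its matrix $N=\left(\begin{smallmatrix}\alpha_1&\alpha_2\\\beta_1&\beta_2\end{smallmatrix}\right)$ to be nilpotent; being nonzero it has rank $1$, hence $P^{-1}NP=\left(\begin{smallmatrix}0&0\\1&0\end{smallmatrix}\right)$ for some $P\in GL_2(\mathbb{K})$. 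The associated linear automorphism $\rho$ of $A$ sends $D_0$ to $x\frac{\partial}{\partial y}$ and preserves $V$ (the space of constant-coefficient vector fields is invariant under conjugation by linear automorphisms). Therefore $\theta:=\rho\psi$ is an automorphism of $A$ with $\frac{\partial}{\partial x},\frac{\partial}{\partial y},x\frac{\partial}{\partial y}\in\theta L\theta^{-1}$.

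I expect the main obstacle to be exactly the minimality step, namely ruling out elements of $L\setminus V$ of degree $\geq 2$ once $V\subseteq L$; the remaining pieces — covariance of Jacobian derivations under automorphisms, and the normal form of a nonzero nilpotent $2\times 2$ matrix — are routine once that reduction is in place.
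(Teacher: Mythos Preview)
Your proof is correct and follows the same overall architecture as the paper: invoke Lemma~\ref{rank_two} to place $\frac{\partial}{\partial x},\frac{\partial}{\partial y}$ into $L$, reduce a third element to a homogeneous linear vector field by bracketing with the partials, and then normalize by a linear coordinate change. Your minimality argument for reaching degree~$1$ is a cleaner packaging of the paper's iterated bracketing (``one can easily show that for some $s,k$\ldots'').

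The one genuine difference is how you show the linear part $N$ is nilpotent. The paper uses two external ingredients: first $\operatorname{div}D_0=0$ (citing \cite[Corollary~3.16]{Fr}) to get $\operatorname{tr}N=0$, and then Lemma~\ref{loc_nilp_der} (Rentschler) to write $D_0=D_h$ with $h=g(c)$ for a coordinate $c$, from which a degree count forces $h=(\mu x+\nu y)^2$. Your argument is more elementary: since $D_0$ preserves the span $\mathbb{K}x+\mathbb{K}y$ and acts there by $N$, local nilpotency of $D_0$ on $A$ immediately gives $N^k=0$. This avoids both the divergence fact and the appeal to Rentschler's theorem, and it would work over any base field of characteristic zero without assuming algebraic closure. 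The paper's route, on the other hand, makes the coordinate change explicit (via the square root $\mu x+\nu y$ of $h$) rather than invoking Jordan form.
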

\begin{proof}
The Lie algebra $L$ contains  elements $D_a, D_b $ for a coordinate pair $(a, b),$ by Lemma \ref{rank_two}. These elements  are linearly independent  over $A$ and  $[D_a, D_b]=0.$  Define an automorphism $\varphi \in Aut(A)$ by the rule: $\varphi (a)=x,  \varphi (b)=y.$ Then $\varphi$ induces an automorphism $\widetilde \varphi $ of the Lie algebra $W_2(\mathbb K),$ namely:  $\widetilde \varphi (D)=\varphi D\varphi ^{-1}$ for  any $D\in W_2(\mathbb K)$  (see, for example, \cite{Bavula1}). One can easily see that $\varphi D_a\varphi ^{-1}=\frac{\partial}{\partial y}$ and $\varphi D_b\varphi ^{-1}=-\frac{\partial}{\partial x}.$  Denote $L_1=\varphi L\varphi ^{-1}.$ It is a subalgebra of the Lie algebra $W_2(\mathbb K).$ The Lie algebra $L_1$ consists  of locally nilpotent derivations of the ring $A$ and $\frac{\partial}{\partial x}, \frac{\partial}{\partial y}\in L_1.$ Let us show that $L_1$ contains an element $D$ of the form $D=p(x, y)\frac{\partial}{\partial x}+q(x, y)\frac{\partial}{\partial y}$ with $\deg p\leq 1, \deg q\leq 1$ and at least one of these polynomials is nonconstant. Take any element $D_1\in L_1\setminus \mathbb K\langle \frac{\partial}{\partial x}, \frac{\partial}{\partial y}\rangle$ (such an element does exist because $\dim _{\mathbb K}L\geq 3).$  Let $D_1=u(x, y)\frac{\partial}{\partial x}+v(x, y)\frac{\partial}{\partial y},$ where $u(x, y), v(x, y)\in \mathbb K[x, y].$
Without loss of generality we may assume that $\deg u\geq \deg v$ and $\deg u\geq 1.$  Using the following relations
$$[\frac{\partial}{\partial x}, D_1]=u'_x\frac{\partial}{\partial x}+v'_x\frac{\partial}{\partial y}, \ \  [\frac{\partial}{\partial y}, D_1]=u'_y\frac{\partial}{\partial x}+v'_y\frac{\partial}{\partial y},$$
one can easily show that for some $s, k, \ s\geq k$ it holds
$$ \frac{\partial ^{s}u}{\partial x^{k}y^{s-k}}\frac{\partial}{\partial x}+\frac{\partial ^{s}v}{\partial x^{k}y^{s-k}}\frac{\partial}{\partial y}\in L_1,$$ where the polynomial $\frac{\partial ^{s}u}{\partial x^{k}\partial y^{s-k}}$ is of degree $1$ and $\frac{\partial ^{s}v}{\partial x^{k}\partial y^{s-k}}$ is of degree $\leq 1.$ Thus, one may assume that $L_1$ contains an element $D=p(x, y)\frac{\partial}{\partial x}+q(x, y)\frac{\partial}{\partial y},$ where $\deg p\leq 1,$ $\deg q\leq 1$ and $D\not \in \mathbb K\langle \frac{\partial}{\partial x}, \frac{\partial}{\partial y}\rangle.$ Since $\frac{\partial}{\partial x},  \frac{\partial}{\partial y}\in L_1$, this element $D$ can be chosen in the form
$$D=(\alpha _{11}x+\alpha _{12}y)\frac{\partial}{\partial x}+(\alpha _{21}x+\alpha _{22}y)\frac{\partial}{\partial y},$$
where $\alpha _{11},\alpha _{12},\alpha _{21},\alpha _{22}\in \mathbb K$  and at least one of them is nonzero.

The locally nilpotent derivation $D$ has zero divergence (see, for example,  \cite[Corollary 3.16]{Fr}), so   it holds $\alpha _{11}+\alpha _{22}={\rm div}D=0$ . Then $D=(\alpha _{11}x+\alpha _{12}y)\frac{\partial}{\partial x}+(\alpha _{21}x-\alpha _{11}y)\frac{\partial}{\partial y}$ and hence  $D=D_{h}$ for the polynomial $h=\alpha _{21}x^{2}/2-\alpha _{11}xy-\alpha _{12}y^{2}/2.$ There exists (by Lemma \ref{loc_nilp_der}) a coordinate polynomial $c\in A$ such that $h=g(c)$ for some  polynomial $g(t)\in \mathbb K[t].$  If $\deg g=1,$ then $h$ is  a coordinate polynomial. This is impossible, because $h$ is reducible as a homogeneous polynomial in two variables. Hence $\deg g=2$ and $\deg c=1.$ A straightforward check shows that there exist $\mu , \nu \in \mathbb K$ such that $h=(\mu x+\nu y)^{2}.$ Choose a polynomial $\mu _{1}x+\nu _{1}y \ (\mu_1, \nu _1\in \mathbb K)$ in such a way that $\mu \nu _1-\mu _{1}\nu=1.$  The polynomials $\mu x+\nu y, \  \mu_1x+\nu_1$   form a coordinate pair in $\mathbb K[x, y],$ and thus there exists an automorphism $\psi$ of the ring $A$ defined by the rule: $\psi (\mu x+\nu y)=x,  \psi (\mu _{1}x+\nu _{1}y)=y.$  Denote $L_2=\psi L_1\psi ^{-1}.$ One can easily check that
$$\psi D_{\mu x+\nu y}\psi ^{-1}=\frac{\partial}{\partial y}, \  \ \psi D_{\mu _{1}x+\nu _{1}y}\psi ^{-1}=-\frac{\partial}{\partial x}.$$ Since $ D_{\mu x+\nu y}, \  D_{\mu _{1}x+\nu _{1}y}\in L_{1},$ we obtain that $\frac{\partial}{\partial x}, \frac{\partial}{\partial y}\in L_2.$  It follows from the  equality $\psi D_{h}\psi ^{-1}=2x\frac{\partial}{\partial y}$ that $x\frac{\partial}{\partial y}\in L_2.$ Thus $L_2=\theta L\theta ^{-1}$ and $\{ \frac{\partial}{\partial x},  \frac{\partial}{\partial y}, x\frac{\partial}{\partial y}\}\subseteq L_2, $  where $\theta=\psi\varphi \in {\rm Aut}A.$
\end{proof}
\begin{lemma}\label{linearpart}
Let $L$ be a subalgebra of the Lie algebra $W_2(\mathbb K)$ such that $L\subseteq {\rm LND}(A).$ If $\{ \frac{\partial}{\partial x},  \frac{\partial}{\partial y}, x\frac{\partial}{\partial y}\}\subseteq L, $ then every element $D=p(x, y)\frac{\partial}{\partial x}+q(x, y)\frac{\partial}{\partial y}$ of $L$ with ${\rm max}\{\deg p, \deg q\}\leq 1$ belongs to the Lie subalgebra $\mathbb K\langle \frac{\partial}{\partial x},  \frac{\partial}{\partial y}, x\frac{\partial}{\partial y}\rangle.$

\end{lemma}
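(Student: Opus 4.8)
The plan is to take an arbitrary $D = p(x,y)\frac{\partial}{\partial x} + q(x,y)\frac{\partial}{\partial y} \in L$ with $\max\{\deg p, \deg q\} \le 1$ and show that the coefficient of $y$ in $p$ must vanish and the coefficient of $y^2$ in the associated Jacobian form is controlled, so that $D$ lies in $\mathbb K\langle \frac{\partial}{\partial x}, \frac{\partial}{\partial y}, x\frac{\partial}{\partial y}\rangle$. Write $D = (a_0 + a_1 x + a_2 y)\frac{\partial}{\partial x} + (b_0 + b_1 x + b_2 y)\frac{\partial}{\partial y}$. Since $\frac{\partial}{\partial x}, \frac{\partial}{\partial y} \in L$, we may subtract off the constant terms and assume $a_0 = b_0 = 0$; the remaining linear derivation $D$ is in $L \subseteq {\rm LND}(A)$, hence (by \cite[Corollary 3.16]{Fr}) has zero divergence, giving $a_1 + b_2 = 0$. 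So $D = (a_1 x + a_2 y)\frac{\partial}{\partial x} + (b_1 x - a_1 y)\frac{\partial}{\partial y} = D_h$ with $h = \tfrac{b_1}{2}x^2 - a_1 xy - \tfrac{a_2}{2}y^2$, exactly as in the proof of Lemma~\ref{automorphism}.

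Next I would exploit the presence of $x\frac{\partial}{\partial y} = D_{x^2/2}$ in $L$ together with the Engel/Jacobian machinery of Corollary~\ref{Engel} and Lemma~\ref{product}. The key computation is to bracket $D_h$ repeatedly against $\frac{\partial}{\partial x} = -D_y$ and $\frac{\partial}{\partial y} = D_x$ and against $x\frac{\partial}{\partial y}$, and to use that every element produced is again a locally nilpotent derivation, hence of the form $D_{g(c)}$ for a coordinate polynomial $c$ (Lemma~\ref{loc_nilp_der}). Concretely, if $a_2 \ne 0$ then $h$ contains a genuine $y^2$ term; one checks that a suitable iterated bracket of $D_h$ with $x\frac{\partial}{\partial y}$ produces a derivation whose Jacobian polynomial is, up to scalars and lower-degree pieces, something like $(x + \lambda y)^2$ or an irreducible quadratic — and the local-nilpotency constraint (via Lemma~\ref{loc_nilp_der}: the polynomial must be $g(c)$ with $c$ coordinate, so a quadratic Jacobian polynomial must be a perfect square of a linear form) forces the offending coefficient to vanish. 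Running through the cases on which of $a_1, a_2, b_1$ are nonzero, one finds $a_2 = 0$ is forced, so $p = a_1 x$ and $q = b_1 x - a_1 y$, i.e. $D = a_1\big(x\frac{\partial}{\partial x} - y\frac{\partial}{\partial y}\big) + b_1\, x\frac{\partial}{\partial y}$.

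Finally I would rule out the remaining bad term $x\frac{\partial}{\partial x} - y\frac{\partial}{\partial y}$: this is the semisimple derivation $D_{-xy}$, and $-xy$ is reducible and not a perfect square of a linear form, so by Lemma~\ref{loc_nilp_der} it is not locally nilpotent; hence if $D \in {\rm LND}(A)$ we must have $a_1 = 0$ as well, leaving $D = b_1\, x\frac{\partial}{\partial y} + (\text{constants absorbed earlier})$, which lies in $\mathbb K\langle \frac{\partial}{\partial x}, \frac{\partial}{\partial y}, x\frac{\partial}{\partial y}\rangle$ as claimed.

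The main obstacle I anticipate is the bookkeeping in the intermediate case — showing that the iterated brackets against $x\frac{\partial}{\partial y}$ (combined with the $\frac{\partial}{\partial x}, \frac{\partial}{\partial y}$ brackets to strip off lower-order terms) really isolate a quadratic Jacobian polynomial that is forced to be a square, so that one can conclude the $y$-coefficient of $p$ vanishes. Once local nilpotency is translated into the "Jacobian polynomial is $g(c)$ with $c$ coordinate, hence a quadratic one is a square of a linear form" dictionary via Lemma~\ref{loc_nilp_der}, the argument is a finite case check on $2\times 2$ matrices with zero trace, essentially the $\mathfrak{sl}_2$-versus-solvable dichotomy.
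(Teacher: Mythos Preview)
Your proposal is correct and follows the paper's approach: strip the constant terms using $\tfrac{\partial}{\partial x},\tfrac{\partial}{\partial y}\in L$, use zero divergence to reduce to a traceless linear vector field, and then exploit $x\tfrac{\partial}{\partial y}\in L$ together with the fact that $x\tfrac{\partial}{\partial x}-y\tfrac{\partial}{\partial y}\notin{\rm LND}(A)$ to kill the remaining coefficients. The only difference is efficiency: where you anticipate iterated brackets, perfect-square reasoning via Lemma~\ref{loc_nilp_der}, and a case analysis on $(a_1,a_2,b_1)$, the paper simply computes the single bracket
\[
\Bigl[x\tfrac{\partial}{\partial y},\,D\Bigr]=a_2\Bigl(x\tfrac{\partial}{\partial x}-y\tfrac{\partial}{\partial y}\Bigr)-2a_1\,x\tfrac{\partial}{\partial y}\in L,
\]
subtracts off the $x\tfrac{\partial}{\partial y}$ multiple (already in $L$), and reads off $a_2=0$ directly from $x\tfrac{\partial}{\partial x}-y\tfrac{\partial}{\partial y}\notin{\rm LND}(A)$; then $D=a_1\bigl(x\tfrac{\partial}{\partial x}-y\tfrac{\partial}{\partial y}\bigr)+b_1\,x\tfrac{\partial}{\partial y}$ and the same observation forces $a_1=0$ --- so the ``main obstacle'' you anticipate never materializes.
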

\begin{proof}
 Since $\frac{\partial}{\partial x},  \frac{\partial}{\partial y}\in L,$ one can assume, without loss of generality, that $D=(\alpha _{11}x+\alpha _{12}y)\frac{\partial}{\partial x}+(\alpha _{21}x+\alpha _{22}y)\frac{\partial}{\partial y}, $ where $\alpha_{11}, \alpha_{12},\alpha_{21},\alpha_{22} \in \mathbb K$ and at least one of them is nonzero.
  The derivation $D$ is locally nilpotent, so we have ${\rm div}D=\alpha _{11}+\alpha _{22}=0$ (see  \cite[Corollary 3.16]{Fr}). Then
$$[x\frac{\partial}{\partial y}, D]=\alpha _{12}(x\frac{\partial}{\partial x}-y\frac{\partial}{\partial y})-2\alpha _{11}x\frac{\partial}{\partial y}\in L.$$ Since $x\frac{\partial}{\partial y}\in L$ we get $\alpha _{12}(x\frac{\partial}{\partial x}-y\frac{\partial}{\partial y})\in L.$ But $(x\frac{\partial}{\partial x}-y\frac{\partial}{\partial y})\not \in {\rm LND}(A)$ and therefore $\alpha _{12}=0.$ Thus $D=\alpha _{11}(x\frac{\partial}{\partial x}-y\frac{\partial}{\partial y})+\alpha _{21}x\frac{\partial}{\partial y}$ and by the same reason, we have $\alpha _{11}=0.$ Hence  $D=\alpha _{21}x\frac{\partial}{\partial y}$ and $D\in \mathbb K\langle \frac{\partial}{\partial x}, \frac{\partial}{\partial y}, x\frac{\partial}{\partial y}\rangle.$
\end{proof}
\begin{lemma}\label{degrees}
Let $L$ be a subalgebra of the Lie algebra $W_2(\mathbb K)$ such that $L\subseteq {\rm LND}(A).$  If $ \{ \frac{\partial}{\partial x},  \frac{\partial}{\partial y}, x\frac{\partial}{\partial y}\}\subseteq L,$ then for any $D=p(x, y)\frac{\partial}{\partial x}+q(x, y)\frac{\partial}{\partial y}\in L$ with ${\rm max}\{ \deg p, \deg q\}\geq 1$ the following is true:

1) $\deg p< \deg q.$

2) the highest homogeneous component of the polynomial $q=q(x, y)$ depends only on $x.$
\end{lemma}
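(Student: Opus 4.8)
The plan is to deduce both assertions from Lemma~\ref{linearpart} by repeatedly bracketing $D$ with $\frac{\partial}{\partial x}$ and $\frac{\partial}{\partial y}$, which stays inside $L$ and differentiates the coefficients. Write $D=p\frac{\partial}{\partial x}+q\frac{\partial}{\partial y}$ and set $n=\max\{\deg p,\deg q\}$, so $n\ge 1$. A direct check gives $\big[\frac{\partial}{\partial x},D\big]=\frac{\partial p}{\partial x}\frac{\partial}{\partial x}+\frac{\partial q}{\partial x}\frac{\partial}{\partial y}$ and $\big[\frac{\partial}{\partial y},D\big]=\frac{\partial p}{\partial y}\frac{\partial}{\partial x}+\frac{\partial q}{\partial y}\frac{\partial}{\partial y}$; since $\frac{\partial}{\partial x},\frac{\partial}{\partial y}\in L$ and these two derivations commute, for all $i,j\ge0$ with $i+j=n-1$ the derivation
$$D_{i,j}:=\frac{\partial^{\,n-1}p}{\partial x^{i}\partial y^{j}}\,\frac{\partial}{\partial x}+\frac{\partial^{\,n-1}q}{\partial x^{i}\partial y^{j}}\,\frac{\partial}{\partial y}$$
is, up to sign, an iterated bracket of $D$ with $\frac{\partial}{\partial x}$ and $\frac{\partial}{\partial y}$, hence lies in $L$; moreover both of its coefficients have degree $\le1$. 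By Lemma~\ref{linearpart}, $D_{i,j}\in\mathbb K\big\langle\frac{\partial}{\partial x},\frac{\partial}{\partial y},x\frac{\partial}{\partial y}\big\rangle$, so the $\frac{\partial}{\partial x}$-coefficient of $D_{i,j}$ is a constant and its $\frac{\partial}{\partial y}$-coefficient contains no monomial in $y$ alone. (When $n=1$ this is just the statement, with zero brackets, that $D$ itself lies in $\mathbb K\langle\frac{\partial}{\partial x},\frac{\partial}{\partial y},x\frac{\partial}{\partial y}\rangle$.)

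The next step is to translate this into conditions on the degree-$n$ homogeneous components $P=\sum_{k=0}^{n}a_kx^ky^{n-k}$ of $p$ and $Q=\sum_{k=0}^{n}b_kx^ky^{n-k}$ of $q$. After $n-1$ differentiations only these top parts contribute linear forms (the degree-$(n-1)$ parts contribute constants, the lower ones vanish), and one computes that $\frac{\partial^{\,n-1}}{\partial x^i\partial y^j}(x^ky^{n-k})$ is a nonzero multiple of $y$ when $k=i$, a nonzero multiple of $x$ when $k=i+1$, and $0$ otherwise; explicitly the $\frac{\partial}{\partial x}$-coefficient of $D_{i,j}$ equals $a_i\,i!(n-i)!\,y+a_{i+1}(i+1)!(n-i-1)!\,x$ plus a constant. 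Hence this coefficient is constant only if $a_i=a_{i+1}=0$, and likewise the $\frac{\partial}{\partial y}$-coefficient has no pure $y$-monomial only if $b_i=0$. Letting $i$ run over $0,1,\dots,n-1$ we conclude $a_k=0$ for every $k$, i.e.\ $P=0$, and $b_0=b_1=\dots=b_{n-1}=0$, i.e.\ $Q=b_nx^n$.

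Finally, $P=0$ gives $\deg p\le n-1$, and since $\max\{\deg p,\deg q\}=n$ this forces $\deg q=n>\deg p$, which is~(1); then $Q=b_nx^n$ with $b_n\ne0$ is the highest homogeneous component of $q$ and depends only on $x$, which is~(2). (Alternatively, once~(1) is known, (2) follows from $\diver D=0$: as $\deg p\le n-1$, the degree-$(n-1)$ component of $\frac{\partial p}{\partial x}$ vanishes, hence so does that of $\frac{\partial q}{\partial y}$, which is $\frac{\partial Q}{\partial y}$.) I expect no genuine obstacle here: the argument is mere bookkeeping of iterated partial derivatives of monomials, and all the real content sits in Lemma~\ref{linearpart}.
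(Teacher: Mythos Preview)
Your proof is correct and follows essentially the same route as the paper: bracket $D$ repeatedly with $\frac{\partial}{\partial x}$ and $\frac{\partial}{\partial y}$ to reduce the coefficients to degree $\le 1$, then invoke Lemma~\ref{linearpart}. The only difference is organizational: the paper argues part~(1) by contradiction, fixing one nonzero coefficient $\alpha_{k,m-k}$ of the top component of $p$ and taking a single $(m-1)$-fold derivative tailored to it, then proves part~(2) separately (using part~(1) to ensure the $\frac{\partial}{\partial x}$-coefficient is already constant); you instead take all $(n-1)$-fold derivatives at once and read off the vanishing of every $a_k$ and of $b_0,\dots,b_{n-1}$ simultaneously, which is a bit cleaner.
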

\begin{proof}
Suppose there exists $D=p(x, y)\frac{\partial}{\partial x}+q(x, y)\frac{\partial}{\partial y}\in L$ satisfying conditions of the lemma such that $\deg p\geq \deg q.$ Denote $m=\deg p,$ then $m\geq 1$ by conditions of the lemma.  Since $$[\frac{\partial }{\partial x}, D]=p'_{x}\frac{\partial}{\partial x}+q'_{x}\frac{\partial}{\partial y}\in L \ \mbox{and}  \ [\frac{\partial }{\partial y}, D]=p'_{y}\frac{\partial}{\partial x}+q'_{y}\frac{\partial}{\partial y}\in L,$$
    it is easy to show that for any nonnegative integers $k, s, \ k\leq s,$  it holds
$$ \frac{\partial ^{s}p}{\partial x^{k}\partial y^{s-k}}\frac{\partial}{\partial x}+ \frac{\partial ^{s}q}{\partial x^{k}\partial y^{s-k}}\frac{\partial}{\partial y}\in L.$$
Denote by $p_{m}(x, y)$ the highest homogeneous component  of the polynomial $p(x, y.$  Let $p_{m}(x, y)=\sum _{i=0}^{m}\alpha _{i, m-i}x^{i}y^{m-i}$ for $ \alpha _{ij}\in \mathbb K$ and let, for example, $\alpha_{k,m-k}\neq 0.$ 
First, let $k>0.$ Then as above
$$D_1:= \frac{\partial ^{m-1}p}{\partial x^{k-1}\partial y^{m-k}}\frac{\partial}{\partial x}+ \frac{\partial ^{m-1}q}{\partial x^{k-1}\partial y^{m-k}}\frac{\partial}{\partial y}\in L,$$ and $D_1$ is of the form $D_1=(\alpha _{1}x+\beta _{1}y+\gamma _1)\frac{\partial}{\partial x}+(\delta _{1}x+\mu _{1}y+\nu _1)\frac{\partial}{\partial y}$ with all the coefficients in $\mathbb K.$ Since $\alpha _{k, m-k}\not =0,$  we have  $\alpha _1\not =0.$  The latter is impossible by Lemma \ref{linearpart}. Further, if $k=0,$  i.e. $\alpha _{0, m}\not =0,$  we get $$D_2:= \frac{\partial ^{m-1}p}{\partial y^{m-1}}\frac{\partial}{\partial x}+ \frac{\partial ^{m-1}q}{\partial y^{m-1}}\frac{\partial}{\partial y}\in L,$$
 $D_2$ is of the form $D_2=(\alpha _{2}x+\beta _{2}y+\gamma _2)\frac{\partial}{\partial x}+(\delta _{2}x+\mu _{2}y+\nu _2)\frac{\partial}{\partial y}$ with all the coefficients in $\mathbb K,$ and $\alpha _2\not =0$ because $\alpha _{0, m}\not =0.$  This is also impossible by the same reason. Therefore $\deg p<\deg q$ for any derivation $D=p(x, y)\frac{\partial}{\partial x}+q(x, y)\frac{\partial}{\partial y}\in L.$

Denote $n=\deg q(x, y)$ and let $q_n(x, y)$ be the highest homogeneous component  of the polynomial $q.$ Suppose $\deg _{y}q_{n}(x, y)=l\geq 1.$ Then as above $$D_3:= \frac{\partial ^{n-1}p}{\partial x^{n-l}\partial y^{l-1}}\frac{\partial}{\partial x}+ \frac{\partial ^{n-1}q}{\partial x^{n-l}\partial y^{l-1}}\frac{\partial}{\partial y}\in L$$
and $D_3$ is of the form  $D_3=\alpha \frac{\partial}{\partial x}+(\beta x+\gamma y+\delta)\frac{\partial}{\partial y}$
with $\gamma \not =0$ because $\deg _{y}q_n=l.$
 Since $  \{ \frac{\partial}{\partial x},  \frac{\partial}{\partial y}, x\frac{\partial}{\partial y}\}\subseteq L $ we get $y\frac{\partial}{\partial y}\in L.$ The latter is impossible because $y\frac{\partial}{\partial y}\not \in {\rm LND}(A).$
The obtained contradiction shows that $\deg _{y}q_n(x, y)=0$ and therefore $q_n=q_n(x).$
\end{proof}
\begin{lemma}\label{final}
Let $L$ be a subalgebra of the Lie algebra $W_2(\mathbb K)$ such that $L\subseteq {\rm LND}(A).$ If $  \{ \frac{\partial}{\partial x},  \frac{\partial}{\partial y}, x\frac{\partial}{\partial y}\}\subseteq L, $ then every element $D\in L$ is of the form $D=\alpha \frac{\partial}{\partial x}+q(x)\frac{\partial}{\partial y}, $ where $\alpha \in \mathbb K$ and $q(t)\in \mathbb K[t].$

\end{lemma}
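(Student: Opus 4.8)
The plan is to prove, by induction on the total degree $d:=\max\{\deg p,\deg q\}$ of a derivation $D=p\frac{\partial}{\partial x}+q\frac{\partial}{\partial y}\in L$, that $p\in\mathbb K$ and $q\in\mathbb K[x]$. If $d\le 1$, then $D$ satisfies the hypothesis of Lemma~\ref{linearpart}, so $D\in\mathbb K\langle\frac{\partial}{\partial x},\frac{\partial}{\partial y},x\frac{\partial}{\partial y}\rangle$; every element of this subalgebra is of the shape $\alpha\frac{\partial}{\partial x}+(\nu x+\mu)\frac{\partial}{\partial y}$, so the base case is settled. So assume $d\ge 2$. By Lemma~\ref{degrees} we already know $\deg p<\deg q=d$ and that the leading homogeneous component of $q$ lies in $\mathbb K[x]$; but this top-degree information alone is not enough, and the induction is what will supply the rest.

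The first step is to determine the low-order structure of $p$ and $q$. Since $\frac{\partial}{\partial x},\frac{\partial}{\partial y}\in L$, the subalgebra $L$ is closed under bracketing with $\frac{\partial}{\partial x}$ and with $\frac{\partial}{\partial y}$, so both $[\frac{\partial}{\partial x},D]=p'_x\frac{\partial}{\partial x}+q'_x\frac{\partial}{\partial y}$ and $[\frac{\partial}{\partial y},D]=p'_y\frac{\partial}{\partial x}+q'_y\frac{\partial}{\partial y}$ lie in $L$ and have total degree $\le d-1$. Applying the induction hypothesis to these two derivations gives $p'_x,p'_y\in\mathbb K$ and $q'_x,q'_y\in\mathbb K[x]$. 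From $p'_x,p'_y\in\mathbb K$ one obtains $p=ax+by+c$ with $a,b,c\in\mathbb K$; from $q'_y\in\mathbb K[x]$ one obtains $q=A(x)y+B(x)$, and then $q'_x\in\mathbb K[x]$ forces $A'(x)=0$, so $q=\beta y+t(x)$ for some $\beta\in\mathbb K$ and $t\in\mathbb K[x]$ (necessarily $\deg t=d$).

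The second step eliminates $b$, then $a$, then $\beta$. Bracketing $D$ with $x\frac{\partial}{\partial y}\in L$ yields $$[x\tfrac{\partial}{\partial y},D]=bx\tfrac{\partial}{\partial x}+((\beta-a)x-by-c)\tfrac{\partial}{\partial y}\in L,$$ an element of total degree $\le 1$; by Lemma~\ref{linearpart} its coefficient of $\frac{\partial}{\partial x}$ must be a constant, so $b=0$. Now I would invoke local nilpotency of $D$ on the generators: $D(x)=ax+c$ gives $D^{n}(x)=a^{n-1}(ax+c)$ for all $n\ge 1$, which is a nonzero polynomial whenever $a\ne 0$, forcing $a=0$; and then $D(y)=\beta y+t(x)$ gives $D^{n}(y)=\beta^{n}y+r_n(x)$ with $r_n\in\mathbb K[x]$, nonzero whenever $\beta\ne 0$, forcing $\beta=0$ (one may instead note that $\beta=\diver D=0$). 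Hence $D=c\frac{\partial}{\partial x}+t(x)\frac{\partial}{\partial y}$, which is the required form and closes the induction.

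I expect the main obstacle to be the first step, i.e.\ upgrading the top-component information provided by Lemma~\ref{degrees} to full control of $p$ and $q$. The device that makes this work is that $L$ absorbs the operators $[\frac{\partial}{\partial x},\,\cdot\,]$ and $[\frac{\partial}{\partial y},\,\cdot\,]$, so repeatedly differentiating $D$ produces honest members of $L$ of strictly smaller degree, to which the inductive assumption applies. After that everything is a short computation: a single bracket with $x\frac{\partial}{\partial y}$ and the elementary observation, already used in Lemmas~\ref{linearpart} and~\ref{degrees}, that vector fields such as $x\frac{\partial}{\partial x}-y\frac{\partial}{\partial y}$ and $y\frac{\partial}{\partial y}$ are not locally nilpotent.
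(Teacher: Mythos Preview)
Your argument is correct, and it follows a genuinely different route from the paper's. The paper does not induct on degree; instead it argues by contradiction, choosing among all elements $D=p\frac{\partial}{\partial x}+q\frac{\partial}{\partial y}\in L$ with $p\notin\mathbb K$ one with $\deg q$ minimal, and then uses a bracket with $x\frac{\partial}{\partial y}$ together with Lemma~\ref{degrees} to force $p=p(x)$, a bracket with $\frac{\partial}{\partial x}$ to force $p=\alpha x+\beta$, and divergence considerations to reduce to $D=\alpha(x\frac{\partial}{\partial x}-y\frac{\partial}{\partial y})+r(x)\frac{\partial}{\partial y}$; the final contradiction comes from the Rentschler-type description of $\mathrm{LND}(\mathbb K[x,y])$ (Lemma~\ref{loc_nilp_der}), by observing that $-\alpha xy+s(x)$ cannot be of the form $f(a)$ for a coordinate polynomial $a$ when $\alpha\neq 0$. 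Your induction bypasses this last structural step entirely: once the inductive hypothesis pins down $p=ax+by+c$ and $q=\beta y+t(x)$, a single bracket with $x\frac{\partial}{\partial y}$ and Lemma~\ref{linearpart} kill $b$, and then the elementary computations $D^{n}(x)=a^{n-1}(ax+c)$ and $D^{n}(y)=\beta^{n}y+r_n(x)$ (or, equivalently, $\diver D=0$) kill $a$ and $\beta$. The net effect is a shorter, more self-contained proof that does not invoke Lemma~\ref{loc_nilp_der} or the irreducibility of coordinate polynomials; the paper's approach, on the other hand, makes the connection to the coordinate-polynomial description of $\mathrm{LND}(\mathbb K[x,y])$ explicit.
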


\begin{proof}
Suppose $L$ contains elements $D$ of the form $D=p(x, y)\frac{\partial}{\partial x}+q(x, y)\frac{\partial}{\partial y}$ with $p(x, y)\in \mathbb K[x, y]\setminus \mathbb K.$ Choose among such elements an element ${ D}={ p}(x, y)\frac{\partial}{\partial x}+{q}(x, y)\frac{\partial}{\partial y}$ with a minimum $\deg {q}.$ Let us show that $ { p}(x, y)$ is a polynomial only in $x.$  Suppose to the  contrary that ${p}'_{y}(x, y)\not =0.$ Then
$$D_{1}:=[x\frac{\partial}{\partial y}, { D}]=[x\frac{\partial}{\partial y}, { p}\frac{\partial}{\partial x}+{q}\frac{\partial}{\partial y}]=x{p}'_{y}\frac{\partial}{\partial x}+(-{p}+x{ q}'_{y})\frac{\partial}{\partial y}$$
and $D_1\in L.$ Since $x{ p}'_{y}\not = {\rm const},$ we have that $\deg (-{ p}+x{q}'_{y})\geq \deg {q}$ by the choice of the polynomial $ q.$
By Lemma \ref{degrees}(1), $\deg p<\deg q,$ and by Lemma \ref{degrees}(2), $\deg q'_y\leq \deg q-2.$ Thus $\deg(-p+xq'_y)<\deg q.$
 This contradicts  our choice of $D$ and therefore  ${p}'_{y}=0,$ i.e. $p=p(x).$

 Further $$[\frac{\partial}{\partial x}, { D}]={ p}'_{x}\frac{\partial}{\partial x}+{q}'_{x}(x, y)\frac{\partial}{\partial y}\in L$$
 and $ \deg {q}'_{x}< \deg {q}.$ By the choice of $D,$ we get that ${ p}'_{x}\in \mathbb K.$ Since ${p}(x)\in \mathbb K[x, y]\setminus \mathbb K,$  it holds ${p}(x)=\alpha x+\beta$ for some $\alpha \in \mathbb K^{\star}, \beta \in \mathbb K.$ Without loss of generality, one can assume that $\beta =0$ because $\frac{\partial}{\partial x}\in L.$ We have ${D}=\alpha x\frac{\partial}{\partial x}+{q}(x, y)\frac{\partial}{\partial y}.$ Then
 $$[\frac{\partial}{\partial x}, { D}]=\alpha \frac{\partial}{\partial x}+{q}'_{x}(x, y)\frac{\partial}{\partial y}\in L$$ and hence ${q}'_{x}(x, y)\frac{\partial}{\partial y}\in L.$ The inclusion $L\subseteq {\rm LND}(A)$ implies that every element of $L$ has zero divergence and therefore ${q}''_{xy}(x, y)=0.$
One can easily show  that ${q}(x, y)=u(y)+r(x)$ for some  univariate polynomials $u(t), r(t)\in \mathbb K[t].$ The latter means that ${D}=\alpha x\frac{\partial}{\partial x}+(u(y)+r(x))\frac{\partial}{\partial y}$ and since ${\rm div}{ D}=\alpha +u'(y)=0,$ we get $u(y)=-\alpha  y+\delta$ for some $ \delta \in \mathbb K.$ Since $\frac{\partial}{\partial y}\in L,$ we may assume  that $\delta =0. $  Thus
${D}=\alpha (x\frac{\partial}{\partial x}-y\frac{\partial}{\partial y})+r(x)\frac{\partial}{\partial y}.$
By Lemma \ref{loc_nilp_der}, ${D}=D_{-\alpha xy+s(x)},$  where $s(x)$ is  a polynomial such that $s'(x)=r(x).$ By the same lemma, we have  $-\alpha xy +s(x)=f(a)$ for a coordinate polynomial $a=a(x, y)$ and some $f(t)\in \mathbb K[t].$  Note that $a'_{y}(x, y)\not =0$ because in the other case $\alpha =0$  which contradicts the hypothesis on $\alpha$ (recall $\alpha \in \mathbb K^*$).

Since this fact, if $\deg f(t)\geq 2,$ then we get $\deg _{y}f(a)\geq 2.$    The latter is impossible because of equality $f(a)=-\alpha xy+s(x).$ Therefore, $\deg f(t)=1$  and $f(a)=\alpha xy+s(x)$ is a coordinate polynomial of the ring $\mathbb K[x, y].$ Denote by $s_{0}$  the constant term of the polynomial $s(x).$ We see that $f(a)-s_0=-\alpha xy-s(x)-s_0$ is also a coordinate polynomial. But the polynomial $\alpha xy-s(x)-s_0$ divides by $x$ and thus  is  reducible. The obtained contradiction shows that every element $D\in L$ is of the form $D=\alpha \frac{\partial}{\partial x}+q(x, y)\frac{\partial}{\partial y}, \alpha \in \mathbb K.$ Since ${\rm div} D=q'_{y}(x,y)=0$, we get that $q$ depends only on $x$. Therefore, $D=\alpha \frac{\partial}{\partial x}+q(x)\frac{\partial}{\partial y}$.

\end{proof}
\begin{theorem}\label{Th2}
Let $L$ be a subalgebra of the Lie algebra $W_2(\mathbb K).$  If $L$ consists of locally nilpotent derivations of the ring $\mathbb K[x, y],$ then there exists an automorphism $\varphi :\mathbb K[x, y]\to \mathbb K[x, y]$ such that  $L_1=\varphi L\varphi ^{-1}$ is a subalgebra of the triangular Lie algebra $u_{2}(\mathbb K).$
\end{theorem}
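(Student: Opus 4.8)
The plan is to split into cases according to $\rk_A L$, reduce to the situation where the three standard derivations $\partial/\partial x$, $\partial/\partial y$, $x\,\partial/\partial y$ lie in (a conjugate of) $L$, and then invoke Lemma~\ref{final} to conclude that the conjugate lies in $u_2(\mathbb K)$. First I would dispose of the low-rank and low-dimensional cases. If $\rk_A L\le 1$, then by Lemma~\ref{abelian} (whose rank-one case does not use abelianness beyond what Lemma~\ref{lin_depend}(1) gives — or one argues directly) every element of $L$ is of the form $g(a)D_a$ for a fixed coordinate polynomial $a$; sending $a\mapsto y$ by an automorphism $\varphi$ of $\mathbb K[x,y]$ turns $D_a$ into $\pm\partial/\partial x$, so $\varphi L\varphi^{-1}\subseteq\mathbb K[x]\,\partial/\partial x\subseteq u_2(\mathbb K)$. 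Actually one must be slightly careful: $\rk_A L=1$ does not force $L$ abelian, but the argument in the proof of Lemma~\ref{abelian} for the rank-one case only uses that any two elements of $L$ are $A$-linearly dependent, which holds by definition of rank; so the same conclusion $L\subseteq\mathbb K[a]D_a$ holds, and this is conjugate into $u_2(\mathbb K)$. If $\rk_A L=2$ but $\dim_{\mathbb K}L\le 2$, then $L$ is conjugate to a subalgebra of $\mathbb K\langle\partial/\partial x,\partial/\partial y\rangle$ (by Lemma~\ref{rank_two}, after conjugating the coordinate pair $(a,b)$ to $(x,y)$, if $\dim L=2$ we are done since $L=\mathbb K\langle D_a,D_b\rangle$ by the same computation as in Lemma~\ref{abelian}; if $\dim L=1$ it reduces to the previous case), which again lies in $u_2(\mathbb K)$.

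The main case is $\rk_A L=2$ and $\dim_{\mathbb K}L\ge 3$. Here Lemma~\ref{automorphism} provides an automorphism $\theta\in\Aut(\mathbb K[x,y])$ such that $L_1:=\theta L\theta^{-1}$ contains $\partial/\partial x$, $\partial/\partial y$, and $x\,\partial/\partial y$. Since conjugation by an automorphism of $\mathbb K[x,y]$ preserves the property of being a locally nilpotent derivation, $L_1\subseteq\LND(A)$. Now Lemma~\ref{final} applies verbatim to $L_1$: every element $D\in L_1$ has the form $D=\alpha\,\partial/\partial x+q(x)\,\partial/\partial y$ with $\alpha\in\mathbb K$ and $q\in\mathbb K[x]$. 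But this is exactly the defining form of elements of the triangular algebra $u_2(\mathbb K)$, so $L_1\subseteq u_2(\mathbb K)$, and we take $\varphi=\theta$.

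I do not expect a serious obstacle here, since all the real work has been pushed into Lemmas~\ref{rank_two}, \ref{automorphism}, \ref{linearpart}, \ref{degrees} and \ref{final}; the theorem is essentially an assembly of these pieces. The one point that needs a little care is the bookkeeping of cases — in particular making sure the rank-one case is handled without accidentally assuming $L$ is abelian, and checking that the $\dim_{\mathbb K}L\le 2$ subcase of $\rk_A L=2$ is genuinely covered (it is, because two $A$-linearly independent commuting elements of $L$ of the special form $D_a,D_b$ already span $L$ over $\mathbb K$ when $\dim_{\mathbb K}L=2$, and any further $\mathbb K$-linear relations only shrink $L$). A secondary point worth a sentence is the justification that $\theta L\theta^{-1}\subseteq\LND(A)$: this is because $\exp(tD)$ conjugated by $\theta$ is $\exp(t\,\theta D\theta^{-1})$, so local nilpotency is automorphism-invariant; this is standard and can be cited from \cite{Fr}.
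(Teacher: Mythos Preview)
Your proposal follows essentially the same route as the paper: the heavy lifting is done by Lemmas~\ref{automorphism} and~\ref{final}, and the theorem is their assembly. The paper's case split is slightly more economical --- it branches on abelian versus nonabelian and invokes Lemma~\ref{abelian} directly for the abelian case (which simultaneously covers your $\rk_A L=1$ subcase and your $\rk_A L=2$, $\dim_{\mathbb K}L=2$ subcase) --- but your rank/dimension split is equivalent, and your observation that the rank-one argument of Lemma~\ref{abelian} does not actually use abelianness is correct and worth recording.

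There is one slip in your rank-one case. If you send $a\mapsto y$, then $g(a)D_a$ conjugates to (a scalar multiple of) $g(y)\,\partial/\partial x$, not $g(x)\,\partial/\partial x$; and in any event neither $\mathbb K[y]\,\partial/\partial x$ nor $\mathbb K[x]\,\partial/\partial x$ lies in $u_2(\mathbb K)$, since elements of $u_2(\mathbb K)$ have \emph{constant} coefficient at $\partial/\partial x$. The fix is immediate: send $a\mapsto x$ (and the companion coordinate $b\mapsto y$), so that $D_a$ conjugates to a scalar multiple of $\partial/\partial y$ and $\mathbb K[a]D_a$ lands in $\mathbb K[x]\,\partial/\partial y\subseteq u_2(\mathbb K)$. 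With this correction the argument goes through.
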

\begin{proof}
If $L$ is abelian, then the statement of the  theorem follows from Lemma \ref{abelian}. Let $L$ be nonabelian. Then ${\rm rk}_{A}(L)=2$ and $\dim _{\mathbb K}L\geq 3.$ By Lemma \ref{automorphism}, there exists  an automorphism $\varphi$ of the polynomial ring $\mathbb K[x, y]$ such that the Lie algebra $L_1=\varphi L\varphi ^{-1}$ contains the elements $\frac{\partial}{\partial x}, \frac{\partial}{\partial y}, x\frac{\partial}{\partial y}.$ By Lemma \ref{final},  every element $D\in L_1$ is of the form $ D=\alpha \frac{\partial}{\partial x}+q(x)\frac{\partial}{\partial y}.$  The latter means that $L_1\subseteq u_2(\mathbb K).$
\end{proof}

\begin{corollary}
Every maximal (by inclusion) subalgebra of the Lie algebra $W_2(\mathbb K)$ which is contained in the  set ${\rm LND}(\mathbb K[x, y])$  is either $u_2(\mathbb K)$ or one of its conjugated by  automorphisms of $\mathbb K[x, y]$ subalgebras.
\end{corollary}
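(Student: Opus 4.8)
The plan is to read this off from Theorem~\ref{Th2} by a short maximality argument, after recording two elementary facts about $u_2(\mathbb K)$. First, $u_2(\mathbb K)\subseteq {\rm LND}(\mathbb K[x,y])$: for $D=\alpha\frac{\partial}{\partial x}+\beta(x)\frac{\partial}{\partial y}\in u_2(\mathbb K)$ one has $D(x)=\alpha\in\mathbb K$ and $D^{k}(y)=\alpha^{k-1}\beta^{(k-1)}(x)$ for $k\ge 1$, so $D$ is nilpotent on each generator of $\mathbb K[x,y]$, and since $\{a\in\mathbb K[x,y] : D^{n}a=0\text{ for some }n\}$ is a subalgebra (Leibniz rule), $D$ is locally nilpotent. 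Second, for any $\theta\in{\rm Aut}(\mathbb K[x,y])$ the identity $(\theta D\theta^{-1})^{n}=\theta D^{n}\theta^{-1}$ shows that conjugation by $\theta$ maps ${\rm LND}(\mathbb K[x,y])$ into itself; hence every conjugate $\theta u_2(\mathbb K)\theta^{-1}$ is a subalgebra of $W_2(\mathbb K)$ contained in ${\rm LND}(\mathbb K[x,y])$.

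Now let $L$ be a maximal (by inclusion) subalgebra of $W_2(\mathbb K)$ with $L\subseteq {\rm LND}(\mathbb K[x,y])$; such subalgebras exist by Zorn's lemma, since the union of a chain of subalgebras of $W_2(\mathbb K)$ contained in ${\rm LND}(\mathbb K[x,y])$ is again one. By Theorem~\ref{Th2} there is $\varphi\in{\rm Aut}(\mathbb K[x,y])$ with $\varphi L\varphi^{-1}\subseteq u_2(\mathbb K)$, equivalently $L\subseteq\varphi^{-1}u_2(\mathbb K)\varphi$. By the first paragraph $\varphi^{-1}u_2(\mathbb K)\varphi$ is a subalgebra of $W_2(\mathbb K)$ lying inside ${\rm LND}(\mathbb K[x,y])$ and containing $L$, so maximality of $L$ forces $L=\varphi^{-1}u_2(\mathbb K)\varphi$. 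Thus $L=\theta u_2(\mathbb K)\theta^{-1}$ with $\theta=\varphi^{-1}\in{\rm Aut}(\mathbb K[x,y])$; in particular $L$ is either $u_2(\mathbb K)$ itself (when $\theta$ normalizes $u_2(\mathbb K)$) or one of its conjugates, as asserted.

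I do not foresee a genuine obstacle here: the whole mathematical content is in Theorem~\ref{Th2}, and the rest is a formal maximality argument together with the two routine facts in the first paragraph. The only point deserving a word of care is the stability of ${\rm LND}(\mathbb K[x,y])$ under conjugation by ring automorphisms, which is exactly what makes $\varphi^{-1}u_2(\mathbb K)\varphi$ an admissible competitor against which maximality of $L$ can be tested.
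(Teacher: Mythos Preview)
Your argument is correct and is exactly the intended one: the paper states the corollary without proof, treating it as an immediate consequence of Theorem~\ref{Th2}, and you have simply made explicit the routine maximality argument together with the two easy facts that $u_2(\mathbb K)\subseteq{\rm LND}(\mathbb K[x,y])$ and that ${\rm LND}(\mathbb K[x,y])$ is stable under conjugation by automorphisms.
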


\begin{remark}
By the known Miyanishi's Theorem \cite{Miya} and a result of D. Daigle \cite{Daigle}, every nonzero $D\in {\rm LND}(\mathbb K[x, y, z])$ is of the form $D=hD_{(a, b)},$  where $a,b \in \mathbb K[x,y,z]$ such that $\Ker D=\mathbb K[a,b]$,  $D_{(a, b)}$ is a jacobian derivation and $h\in \Ker D=\mathbb K[a, b].$
Unfortunately, this fact does not enable  to prove that every Lie algebra $L\subseteq  {\rm LND}(\mathbb K[x, y, z])$ is Engelian (as in the case ${\rm LND}(\mathbb K[x, y])$, in Corollary \ref{Engel}).
\end{remark}

%%%%%%%%%%%%%%%%%%%%%%%%%%%%%%%%%%%%%%%%%%

%
\end{document}